\documentclass[11pt]{article}
\usepackage[margin=1in]{geometry}
\usepackage{amsmath,amssymb,amsthm}
\usepackage{mathtools}
\usepackage{enumitem}
\usepackage[colorlinks=true,linkcolor=blue,citecolor=blue]{hyperref}

\theoremstyle{plain}
\newtheorem{theorem}{Theorem}
\newtheorem{proposition}[theorem]{Proposition}
\newtheorem{lemma}[theorem]{Lemma}
\newtheorem{corollary}[theorem]{Corollary}
\theoremstyle{definition}
\newtheorem{definition}[theorem]{Definition}
\newtheorem{remark}[theorem]{Remark}
\newtheorem{convention}[theorem]{Convention}

\newcommand{\Aut}{\operatorname{Aut}}
\newcommand{\Sym}{\operatorname{Sym}}
\newcommand{\Stab}{\operatorname{Stab}}
\newcommand{\Z}{\mathbb{Z}}
\newcommand{\Sig}{\Sigma_{Q}}
\newcommand{\ord}{\operatorname{ord}}
\newcommand{\id}{\operatorname{id}}

\title{Minimum Size of a Poset Realizing $\Z_{2}\times\Z_{2^{n}}$\\ as its Automorphism Group}
\author{Ponaki Das\thanks{Department of Mathematics, North-Eastern Hill University,
NEHU Campus, Shillong 793022, India. \texttt{ponaki.das20@gmail.com}}
\and
Sainkupar Marwein Mawiong\thanks{Department of Basic Sciences and Social Sciences,
North-Eastern Hill University, NEHU Campus, Shillong 793022, India.
\texttt{skupar@gmail.com}. Corresponding author.}}
\date{}

\begin{document}
\maketitle

\begin{abstract}
We study the realization of finite groups as automorphism groups of finite
posets. Given a finite group $G$, let $\beta(G)$ denote the smallest number of
elements in a poset $P$ with $\Aut(P)\cong G$. While $\beta(G)$ is known for
several cyclic and small abelian groups, the non-cyclic abelian case is largely
open. In this paper we prove that
$\beta(\Z_{2}\times\Z_{2^{n}})=2^{\,n+1}+2$ for every $n\ge 3$.
\end{abstract}

\noindent\textbf{Keywords:} Finite posets, automorphism group, orbit
decomposition, group actions.\\
\textbf{2020 Mathematics Subject Classification:} 06A07, 20B25.

\section{Introduction}

The study of finite topological spaces, once a curiosity, has become a tool in
algebraic topology, largely through the work of McCord~\cite{McCord}, who showed
that every finite $T_{0}$ space is weak homotopy equivalent to the geometric
realization of its order complex. This lets one model topological phenomena by
finite posets, and the automorphism group $\Aut(P)$ of a poset $P$ is isomorphic
to the homeomorphism group of the corresponding finite space. It is therefore
natural to ask, for a finite group $G$, what is the smallest poset realizing $G$
as its full symmetry group. To measure this we use the invariant
\[
\beta(G):=\min\{\,|P| : \Aut(P)\cong G\,\}.
\]
Combinatorially $\beta(G)$
quantifies the minimality of a poset with prescribed symmetry; topologically it
bounds below the cardinality of any finite $T_{0}$ space with homeomorphism
group $G$.

Minimal realizations of abelian groups by graphs were studied by Coxeter and
others~\cite{Arlinghaus,Coxeter,Meriwether}; in the digraph setting Babai
initiated the study of regular automorphism groups~\cite{Babai1,Babai2}, and in
the poset setting Barmak and Barreto developed a systematic theory for abelian
groups~\cite{Barmak2,Barreto,BarmakBarreto}. In this paper we determine
$\beta(G)$ for the infinite family $G=\Z_{2}\times\Z_{2^{n}}$, $n\ge 3$:
\[
\beta(\Z_{2}\times\Z_{2^{n}})=2^{\,n+1}+2,\qquad n\ge 3 .
\]
We obtain a sharp lower bound by analyzing the kernel of the action on free
orbits and showing that any minimal poset must contain at least two free orbits
and at least two non-free points; the matching upper bound comes from an
explicit, self-contained construction that adjoins a $\Z_{2}$ factor to a
minimal realization of $\Z_{2^{n}}$. The boundary case $n=2$ is genuinely
different: there $\beta(\Z_{4})=12$ rather than $2\cdot2^{2}$
(Corollary~\ref{cor:cyclic}, since $b(4)=3$), so already the upper bound moves,
giving $\beta(\Z_{2}\times\Z_{4})\le14\neq2^{\,3}+2$; the matching lower bound,
and hence the exact value, is established separately
in~\cite{DasMawiongZ4Pre}. The case $n=1$ concerns $\Z_{2}\times\Z_{2}$, where
$\beta(\Z_{2}\times\Z_{2})=4$, and is classical (see~\cite{Gyenizse,BarmakBarreto}).
These boundary cases are recorded
only to delimit the scope of our result; they play no role in the proof of
Theorem~\ref{thm:main}, which assumes $n\ge3$ throughout.

\section{Preliminaries}

We recall the bounds for direct products and cyclic groups that we need, and we
fix terminology and prove the structural lemmas used throughout.

All posets are finite and carry a (non-strict) partial order $\le$, i.e.\ a
reflexive, antisymmetric, transitive relation. We write $x<y$ for $x\le y$ with
$x\neq y$; the strict part $<$ is then irreflexive and transitive. We write
$x\parallel y$, and call $x,y$ \emph{incomparable}, when neither $x\le y$ nor
$y\le x$; thus for any $x\neq y$ exactly one of $x<y$, $y<x$, $x\parallel y$
holds. An \emph{antichain} is a set of pairwise incomparable points.

\begin{convention}\label{conv:realize}
We say that a finite poset $P$ \emph{realizes} a finite group $G$ if
$\Aut(P)\cong G$. We fix such an isomorphism and identify $G$ with $\Aut(P)$;
thus $G$ acts on $P$ by order-automorphisms, this action is faithful, and its
image is all of $\Aut(P)$. The poset $P$ is \emph{minimal} (for $G$) if, in
addition, $|P|=\beta(G)$. Throughout, $G=H\times K$ with $H=\langle h\rangle\cong
\Z_{2^{n}}$, $K=\langle k\rangle\cong\Z_{2}$, $n\ge 3$, and $m:=2^{n}$ (the
order of $H$).
\end{convention}

The only direct-product fact needed in this paper is the special case of
multiplication by $\Z_{2}$, for which we give a short self-contained proof
(thus the upper bound below depends on no unpublished source).

\begin{proposition}\label{prop:timesZ2}
Let $A$ be a finite group and let $P_{1}$ be a finite poset with
$\Aut(P_{1})\cong A$. Pick two elements $a_{1},a_{2}\notin P_{1}$ and let
$P:=P_{1}\cup\{a_{1},a_{2}\}$ be ordered by: the order of $P_{1}$ on $P_{1}$;
$a_{1}$ and $a_{2}$ mutually incomparable; $a_{i}<p$ for every $p\in P_{1}$ and
$i\in\{1,2\}$; and no further relations. Then $P$ is a poset,
$|P|=|P_{1}|+2$, and
\[
\Aut(P)\;\cong\;A\times\Z_{2}.
\]
In particular $\beta(A\times\Z_{2})\le\beta(A)+2$.
\end{proposition}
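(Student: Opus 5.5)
The plan is to show directly that $P$ is a poset, to pin down the subset $\{a_{1},a_{2}\}$ intrinsically, and then to show that $\Aut(P)$ splits as a direct product of $\Aut(P_{1})$ and the swap of the two new points.

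\textbf{Step 1: $P$ is a poset.} Reflexivity and antisymmetry are immediate from the definition, since no relation is added between two new points or from $P_{1}$ down to a new point. For transitivity I would run through the cases by where the middle element lies. If $x<y<z$ with $y=a_{i}$, then nothing lies strictly below $a_{i}$, so this case cannot occur. If $y\in P_{1}$, then $z\in P_{1}$. Either $x\in P_{1}$, and transitivity of $P_{1}$ applies, or $x=a_{j}$, and then $a_{j}<z$ holds by definition. The count $|P|=|P_{1}|+2$ is clear.

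\textbf{Step 2: $\{a_{1},a_{2}\}$ is invariant under every automorphism.} First I dispose of the degenerate case $P_{1}=\emptyset$. Then $A$ is trivial, $P$ is a two-element antichain, and $\Aut(P)\cong\Z_{2}$, as required. Now assume $P_{1}\neq\emptyset$. The minimal elements of $P$ are exactly $a_{1}$ and $a_{2}$: each $a_{i}$ is minimal, because nothing lies below it. Each $p\in P_{1}$ is not minimal, because $a_{1}<p$. Any order-automorphism permutes the set of minimal elements, so every $\varphi\in\Aut(P)$ satisfies $\varphi(\{a_{1},a_{2}\})=\{a_{1},a_{2}\}$. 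It follows that $\varphi(P_{1})=P_{1}$.

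\textbf{Step 3: the isomorphism.} Define
\[
\Phi:\Aut(P)\to\Aut(P_{1})\times\Sym\{a_{1},a_{2}\},\qquad \varphi\mapsto(\varphi|_{P_{1}},\varphi|_{\{a_{1},a_{2}\}}).
\]
Each restriction is an automorphism of the induced subposet: on $P_{1}$ because the induced order is the original one, and on $\{a_{1},a_{2}\}$ trivially. The map $\Phi$ is clearly an injective homomorphism.

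For surjectivity, take any pair $(\alpha,\sigma)$ and glue it into a bijection $\varphi$ of $P$. Then check that $\varphi$ preserves and reflects each type of relation. Relations inside $P_{1}$ are preserved because $\alpha$ is an automorphism. Relations of the form $a_{i}<p$ become $a_{\sigma(i)}<\alpha(p)$, which hold by definition. The pair $a_{1}\parallel a_{2}$ is preserved, and no relation $p<a_{i}$ ever occurs. Since $\varphi^{-1}$ is the gluing of $(\alpha^{-1},\sigma^{-1})$, reflection of relations follows by symmetry.

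This gives $\Aut(P)\cong\Aut(P_{1})\times\Z_{2}\cong A\times\Z_{2}$. Taking $P_{1}$ with $|P_{1}|=\beta(A)$ then yields $\beta(A\times\Z_{2})\le\beta(A)+2$.

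\textbf{Main obstacle.} The only point needing care is Step 2, namely that the new points are intrinsically distinguishable from $P_{1}$. I would use the minimal-elements argument, since $a_{1},a_{2}$ lie below all of the nonempty set $P_{1}$. This makes the argument independent of the structure of $P_{1}$, including whether $P_{1}$ has minimal elements that look like $a_{i}$, and the empty case is handled separately.
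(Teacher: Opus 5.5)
Your proposal is correct and follows essentially the same route as the paper: verify the order axioms, identify $\{a_{1},a_{2}\}$ as the set of minimal elements of $P$, and show that the restriction map to $\Aut(P_{1})\times\Sym\{a_{1},a_{2}\}$ is an isomorphism, with surjectivity obtained by gluing. Your separate treatment of $P_{1}=\emptyset$ is harmless but unnecessary, since the minimal-element argument also holds vacuously in that case.
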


\begin{proof}
\emph{$P$ is a poset.} Reflexivity is built in. For antisymmetry, the only
relations are within $P_{1}$ (a poset) and the relations $a_{i}<p$
($p\in P_{1}$); no relation has a $P_{1}$-element below an $a_{i}$, and
$a_{1},a_{2}$ are incomparable, so no two distinct elements are mutually
related. For transitivity, the only chains through a new element are
$a_{i}\le a_{i}\le p$ and $a_{i}\le p\le q$ with $p,q\in P_{1}$; in the second,
$a_{i}<q$ holds by definition, and chains entirely in $P_{1}$ are transitive
there. No other chains through a new element occur: nothing lies strictly below
an $a_{i}$, and the only elements above an $a_{i}$ are points of $P_{1}$, so an
$a_{i}$ never occurs as the interior or top vertex of a chain. Hence $\le$ is a
partial order, and clearly $|P|=|P_{1}|+2$.

\emph{The minimal elements of $P$ are exactly $a_{1},a_{2}$.} Nothing lies below
$a_{1}$ or below $a_{2}$, so both are minimal. For any $p\in P_{1}$ we have
$a_{1}<p$, so $p$ is not minimal. Thus $\min(P)=\{a_{1},a_{2}\}$, a
set preserved by every automorphism. Consequently every $\varphi\in\Aut(P)$
restricts to a bijection of $\{a_{1},a_{2}\}$ and to a bijection of
$P\setminus\{a_{1},a_{2}\}=P_{1}$.

Define
\[
\Psi:\Aut(P)\longrightarrow\Aut(P_{1})\times\Sym\{a_{1},a_{2}\},\qquad
\Psi(\varphi)=\bigl(\varphi|_{P_{1}},\ \varphi|_{\{a_{1},a_{2}\}}\bigr).
\]

\emph{$\Psi$ is well defined.} For $p,q\in P_{1}$, the restriction of $\le_{P}$
to $P_{1}$ is exactly $\le_{P_{1}}$; since $\varphi$ preserves $\le_{P}$ and
maps $P_{1}$ onto $P_{1}$, $\varphi|_{P_{1}}$ preserves $\le_{P_{1}}$ and is a
bijection, hence $\varphi|_{P_{1}}\in\Aut(P_{1})$. Also
$\varphi|_{\{a_{1},a_{2}\}}\in\Sym\{a_{1},a_{2}\}$. Since both $P_{1}$ and
$\{a_{1},a_{2}\}$ are invariant under every automorphism, $\Psi$ is a group
homomorphism. It is injective because $P=P_{1}\sqcup\{a_{1},a_{2}\}$, so
$\varphi$ is determined by its two restrictions.

\emph{$\Psi$ is surjective.} Given $\alpha\in\Aut(P_{1})$ and
$\varepsilon\in\Sym\{a_{1},a_{2}\}$, define $\varphi:=\alpha$ on $P_{1}$ and
$\varphi:=\varepsilon$ on $\{a_{1},a_{2}\}$; this is a bijection of $P$. We
check $x\le_{P}y\iff\varphi(x)\le_{P}\varphi(y)$ in all cases. If
$x,y\in P_{1}$, this is $x\le_{P_{1}}y\iff\alpha(x)\le_{P_{1}}\alpha(y)$, true
since $\alpha\in\Aut(P_{1})$. If $x=a_{i}$, $y\in P_{1}$, both sides hold (every
$a$-element is below every $P_{1}$-element, and $\varphi(x)\in\{a_{1},a_{2}\}$,
$\varphi(y)\in P_{1}$). If $x\in P_{1}$, $y=a_{j}$, both sides fail (no
$P_{1}$-element is $\le$ an $a$-element). If $x,y\in\{a_{1},a_{2}\}$, then
$x\le_{P}y\iff x=y\iff\varphi(x)=\varphi(y)\iff\varphi(x)\le_{P}\varphi(y)$,
since $a_{1}\parallel a_{2}$ and $\varepsilon$ is a bijection of a $2$-element
set. Hence $\varphi\in\Aut(P)$ and $\Psi(\varphi)=(\alpha,\varepsilon)$.

Therefore $\Psi$ is an isomorphism and
$\Aut(P)\cong\Aut(P_{1})\times\Sym\{a_{1},a_{2}\}\cong A\times\Z_{2}$. Taking
$P_{1}$ with $|P_{1}|=\beta(A)$ gives $\beta(A\times\Z_{2})\le\beta(A)+2$.
\end{proof}

\begin{corollary}[{\cite[Cor.~4.2]{BarmakBarreto}}]\label{cor:cyclic}
Let $N=p_{1}^{r_{1}}\cdots p_{k}^{r_{k}}$ with the $p_{i}$ pairwise distinct
primes and $r_{i}\ge 1$. Then the minimum number $\beta(\Z_{N})$ of points in a
poset with cyclic automorphism group of order $N$ is
\[
\sum_{i=1}^{k} b\!\left(p_{i}^{r_{i}}\right)\,p_{i}^{r_{i}}\;-\;1
\quad\text{if }\;3\mid N,\ 4\mid N,\ 9\nmid N,\ 8\nmid N
\quad(\text{equivalently, }3\,\|\,N\text{ and }4\,\|\,N),
\]
and is $\displaystyle\sum_{i=1}^{k} b\!\left(p_{i}^{r_{i}}\right)\,p_{i}^{r_{i}}$
otherwise, where
\[
b(2)=1,\qquad b(3)=b(4)=b(5)=b(7)=3,\qquad
b(p^{r})=2\ \text{for every other prime power.}
\]
\end{corollary}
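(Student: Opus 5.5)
The plan is to treat Corollary~\ref{cor:cyclic} as a consequence of the general theory for abelian groups in~\cite{BarmakBarreto}. It has three parts: the prime-power case, the assembly over the prime factorization, and the exceptional saving of one point. I only sketch how the argument there would be organized.

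\emph{Step 1: prime powers, $\beta(\Z_{p^{r}})=b(p^{r})p^{r}$.} For the lower bound, let $P$ realize $\Z_{p^{r}}$ and let $g$ be a generator.
\begin{itemize}[nosep]
\item Since the action is faithful and all orbits have $p$-power size, some orbit is free, of size $p^{r}$.
\item A single free orbit with some fixed points cannot work. The induced order on such an orbit, together with its comparabilities to fixed points, is invariant under a reflection or a larger symmetric group. For instance, a $\Z_{m}$-invariant order on one orbit is an antichain, so the whole symmetric group acts.
\item Hence $b(p^{r})\ge 2$, and at least two orbits of size $p^{r}$ are needed.
\end{itemize}
For the upper bound, take two orbits $A=\{a_{i}\}$ and $B=\{b_{i}\}$ indexed by $\Z_{m}$, with $b_{i}>a_{i+s}$ for $s$ in a connection set $S\subset\Z_{m}$. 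Choose $S$ so that the only permutations of $\Z_{m}$ preserving this bipartite structure are translations, i.e.\ no affine map $x\mapsto ux+t$ with $u\ne 1$ preserves $S$ up to translation. Then $\Aut(P)\cong\Z_{m}$. For $m=2$ a single antichain of two points suffices, which gives $b(2)=1$. For $m\in\{3,4,5,7\}$ no suitable $S$ exists; this is a finite check over all subsets $S$ and all units $u$. In those cases a third orbit is needed, and conversely three orbits can be made rigid.

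\emph{Step 2: composite $N$.} For the upper bound, take minimal realizations $P_{i}$ of the $\Z_{p_{i}^{r_{i}}}$, chosen pairwise non-isomorphic and connected. Their disjoint union has automorphism group $\prod_{i}\Z_{p_{i}^{r_{i}}}\cong\Z_{N}$ by coprimality, which gives $\sum_{i} b(p_{i}^{r_{i}})p_{i}^{r_{i}}$ points.

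For the lower bound, decompose $P$ into orbits of $\Z_{N}$ and project to each Sylow subgroup. The Sylow $p_{i}$-subgroup must act faithfully, so the orbits must cover $p_{i}^{r_{i}}$-divisible sizes at least $b(p_{i}^{r_{i}})$ times, in the sense of Step 1. One then shows that orbits of mixed size $p_{i}^{r_{i}}p_{j}^{r_{j}}$ never save points, because $p_{i}^{r_{i}}p_{j}^{r_{j}}\ge p_{i}^{r_{i}}+p_{j}^{r_{j}}$. The only exception is a situation in which one mixed orbit can replace an orbit of each of two factors that both require three orbits.

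\emph{Step 3: the exception.} The counting in Step 2 leaves a gap of at most one point only when $3\,\|\,N$ and $4\,\|\,N$. There I would exhibit the explicit construction realizing $\Z_{12}$-type behaviour with $9+12-1=20$ points, in which one orbit of size $12$ doubles as the third orbit for both factors. I would then show this saving is impossible whenever $9\mid N$ or $8\mid N$, since then $b=2$ for that factor and nothing is left to save.

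\emph{Main obstacle.} I expect the hardest part to be the lower bound in Step 2. Controlling how orbits of composite size interact with the rigidity requirement for each Sylow subgroup is delicate, and it is exactly where the $-1$ correction arises. I would therefore rely on the orbit-by-orbit inequalities established in~\cite{BarmakBarreto} rather than rederive them.
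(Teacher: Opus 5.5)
The paper does not prove this statement. It quotes it from \cite[Cor.~4.2]{BarmakBarreto} (see also \cite{Gyenizse}) and uses it only for $N=2^{n}$. Your closing move, deferring to the orbit inequalities of \cite{BarmakBarreto}, is therefore the paper's own route and would suffice by itself. The problem is the outline you attach: it is not a correct sketch of that proof, and its central claims fail. Read the way your inequality $p_i^{r_i}p_j^{r_j}\ge p_i^{r_i}+p_j^{r_j}$ needs it, each $\Z_N$-orbit of size divisible by $p_i^{r_i}$ counts once towards a quota of $b(p_i^{r_i})$ for $p_i$. In that reading the Step~2 rule contradicts the statement you are proving. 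For $N=12$ the eligible sizes are $3,6,12$ for the $3$-part and $4,12$ for the $2$-part. Letting $k$ be the number of $12$-orbits, the quotas force at least $\min_{0\le k\le 3}\bigl(12k+7(3-k)\bigr)=21$ points, whereas the statement gives $\beta(\Z_{12})=9+12-1=20$. Your Step~3 construction has the same defect. Take three orbits of size $3$ and three of size $4$, and replace the third of each by one $12$-orbit: this gives $3+3+4+4+12=26$ points, not $20$. Your own inequality shows such a replacement can never save a point. So the source of the $-1$ is misidentified. It must come from configurations your count does not see, for instance:
\begin{itemize}
\item a mixed orbit standing in for several orbits of one factor (a free $12$-orbit contains four free $\Z_3$-orbits and three free $\Z_4$-orbits);
\item a non-free mixed orbit, such as a $\Z_{12}$-orbit of size $6<3+4$.
\end{itemize}
Hence the Step~2 lower bound, and the claim that the defect is at most one and occurs exactly when $3\,\|\,N$ and $4\,\|\,N$, are not reduced to anything in your outline; they are the whole content of the cited theorem.

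Step~1 also has gaps. Ruling out a single free orbit is not only about fixed points. For $r\ge2$ there may be non-free orbits of size $p^{j}$ with $0<j<r$ whose relations to the free orbit are not uniform, so the full symmetric group of that orbit need not extend. What works is the argument of Lemma~\ref{lem:rowsig} and Corollary~\ref{cor:sigrigid}. The subgroup of order $p$ fixes every non-free orbit pointwise, so $x$ and $g^{p^{r-1}}x$ have equal full row-signatures. The transposition $(x\;g^{p^{r-1}}x)$ is then an extra automorphism once $p^{r}\ge3$. More seriously, for $p^{r}\in\{3,4,5,7\}$ your check over connection sets $S$ only covers posets consisting of exactly two free orbits. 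It does not exclude two free orbits plus non-free orbits (for $\Z_4$, e.g.\ $4+4+2$ or $4+4+2+1$ points), so $\beta(\Z_{p^{r}})\ge 3p^{r}$ is not proved. On the construction side there are two further problems. The absence of a non-trivial multiplier $u$ is only a necessary condition for $\Aut(A\cup B)\cong\Z_{m}$; non-affine automorphisms must also be excluded. And the minimal realization of $\Z_{2}$ is a $2$-point antichain, which is not connected, so the disjoint-union step needs adjusting.
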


The same value was obtained independently by Gyenizse, Hajnal and
Z\'adori~\cite{Gyenizse}.

\begin{remark}
For coprime $a,b$ the value is immediate: $\Z_{a}\times\Z_{b}\cong\Z_{ab}$ is
cyclic, so $\beta(\Z_{a}\times\Z_{b})=\beta(\Z_{ab})$ is read off directly from
Corollary~\ref{cor:cyclic}. The genuine difficulty in the abelian case is
therefore concentrated in the \emph{non-coprime} situation, where no such
reduction is available; $\Z_{2}\times\Z_{2^{n}}$ (with $\gcd(2,2^{n})=2$) is the
first infinite family of this kind, and is the subject of the present paper.
(In Corollary~\ref{cor:cyclic}, $N$ denotes a group order and should not be
confused with the exponent $n$ in $\Z_{2^{n}}$ used throughout the rest of the
paper.)
\end{remark}

\begin{lemma}\label{lem:antichain}
Let a finite group $\Gamma$ act on a finite poset $P$ by order-automorphisms.
Then every $\Gamma$-orbit is an antichain in $P$.
\end{lemma}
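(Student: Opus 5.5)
The plan is to argue by contradiction. Suppose some orbit contains two distinct comparable points. Since the orbit is $\Gamma x$ for some $x\in P$, we may write these points as $gx$ and $g'x$ for some $g,g'\in\Gamma$. Applying the order-automorphism $g^{-1}$ preserves strict comparability, so we may assume that $x<\gamma x$ with $\gamma:=g^{-1}g'$. (The case $\gamma x<x$ is symmetric, or can be reduced to this one by swapping the roles of $g$ and $g'$.)

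The key step is to iterate. Because $\gamma$ acts as an order-automorphism, it preserves the strict relation $<$. Applying $\gamma$ repeatedly to $x<\gamma x$ therefore yields the chain
\[
x<\gamma x<\gamma^{2}x<\cdots<\gamma^{r}x .
\]
Here $r$ is the order of $\gamma$ in the finite group $\Gamma$, which is finite and at least $1$. By transitivity of $<$ we obtain $x<\gamma^{r}x=x$, which contradicts irreflexivity of the strict order.

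Consequently, no two distinct points of an orbit are comparable, so every orbit is an antichain. The only point needing care is that automorphisms preserve the strict part $<$ and not just $\le$. This holds because an automorphism is a bijection preserving $\le$ in both directions, so it also preserves the relation $x\neq y$. I do not expect any real obstacle here; finiteness of $\Gamma$ is used only to guarantee $\gamma^{r}=\id$. Alternatively, one could use finiteness of $P$ directly: the points $\gamma^{i}x$ cannot all be distinct, which would give the same contradiction.
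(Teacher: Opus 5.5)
Your proof is correct and follows essentially the same route as the paper's: reduce to $x<\gamma x$ for a pair in one orbit, iterate $\gamma$ up to its order $r$ to get $x<\gamma^{r}x=x$, and contradict irreflexivity. Your side remarks are also sound, namely that automorphisms preserve the strict order and that finiteness of $P$ alone would suffice.
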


\begin{proof}
Suppose some orbit is not an antichain. Then there are distinct $x,y$ in one
orbit with $x<y$, say $y=\gamma\cdot x$ with $\gamma\neq e$. Since $\gamma$ is an
order-automorphism, $a<b\iff \gamma\cdot a<\gamma\cdot b$; applying $\gamma$
repeatedly to $x<\gamma\cdot x$ gives
\[
x<\gamma\cdot x<\gamma^{2}\cdot x<\cdots .
\]
Let $t=\ord(\gamma)$ (finite, as $\Gamma$ is finite). Then $\gamma^{t}=e$, so
\[
x<\gamma\cdot x<\cdots<\gamma^{t-1}\cdot x<\gamma^{t}\cdot x=x.
\]
The first step gives $x<\gamma\cdot x$, hence $x\le\gamma\cdot x$ with
$x\neq\gamma\cdot x$; the remaining steps give $\gamma\cdot x<x$ by transitivity
of $<$, hence $\gamma\cdot x\le x$. From $x\le\gamma\cdot x$ and
$\gamma\cdot x\le x$, antisymmetry forces $x=\gamma\cdot x$, contradicting
$x\neq\gamma\cdot x$. (Equivalently, composing the chain by transitivity yields
$x<x$, impossible since $<$ is irreflexive.) Thus every orbit is an antichain.
\end{proof}

\begin{remark}
No freeness hypothesis is required: the proof uses only that the action is by
order-automorphisms, so Lemma~\ref{lem:antichain} applies to every orbit (free
or not) of the $H$- and $G$-actions on $P$. This is the form used below, where
$P$ also contains non-free orbits.
\end{remark}

\begin{definition}\label{def:freeorbit}
An $H$-orbit $O\subseteq P$ is \emph{free} if $\Stab_{H}(x)=\{e\}$ for some
$x\in O$ (equivalently, for every $x\in O$: points of one $H$-orbit have
conjugate $H$-stabilizers, hence equal ones since $H$ is abelian), and
\emph{non-free} otherwise.
\end{definition}

\section{The realization \texorpdfstring{$\Z_{2}\times\Z_{2^{n}}$}{Z2 x Z2n}}

We first record the upper bound and then develop the signature machinery used
for the lower bound.

\begin{lemma}\label{lem:upper}
For $n\ge 3$, $\ \beta(\Z_{2}\times\Z_{2^{n}})\le 2^{\,n+1}+2$.
\end{lemma}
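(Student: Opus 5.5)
The plan is to combine Proposition~\ref{prop:timesZ2} with Corollary~\ref{cor:cyclic}. Take $A=\Z_{2^{n}}$ and let $P_{1}$ be a poset with $\Aut(P_{1})\cong\Z_{2^{n}}$ and $|P_{1}|=\beta(\Z_{2^{n}})$. Proposition~\ref{prop:timesZ2} gives a poset $P$ with $|P|=\beta(\Z_{2^{n}})+2$ and $\Aut(P)\cong\Z_{2^{n}}\times\Z_{2}\cong\Z_{2}\times\Z_{2^{n}}$. Hence
\[
\beta(\Z_{2}\times\Z_{2^{n}})\le\beta(\Z_{2^{n}})+2 .
\]

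The remaining step is to evaluate $\beta(\Z_{2^{n}})$ from Corollary~\ref{cor:cyclic} with $N=2^{n}$. Here $k=1$, $p_{1}=2$ and $r_{1}=n$. Since $3\nmid N$, the exceptional ``$-1$'' case does not apply. For $n\ge3$, the prime power $2^{n}$ is none of $2,3,4,5,7$, so $b(2^{n})=2$. Therefore $\beta(\Z_{2^{n}})=2\cdot2^{n}=2^{n+1}$. Substituting gives $\beta(\Z_{2}\times\Z_{2^{n}})\le2^{n+1}+2$.

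Both ingredients are already established, so there is no real obstacle. The only point needing care is the hypothesis $n\ge3$. For $n=2$ we have $b(4)=3$, giving $\beta(\Z_{4})=12$ and the weaker bound $14$. For $n=1$ we have $b(2)=1$. Both cases are excluded by the hypothesis, so the value $b(2^{n})=2$ is legitimate.

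If a fully self-contained argument were wanted, one could also display an explicit $P_{1}$. A candidate is two copies of $\Z_{2^{n}}$ arranged as a crown-like bipartite poset: $x_{i}<y_{i}$ and $x_{i}<y_{i+1}$, with $y_{i}$ also above $x_{i+3}$ in some asymmetric pattern so that reflections are destroyed. One would then verify that $\Aut(P_{1})\cong\Z_{2^{n}}$. This verification would be the hardest part, but it is unnecessary given Corollary~\ref{cor:cyclic}.
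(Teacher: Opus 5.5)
Your proof is correct and is essentially the paper's own argument. You evaluate $\beta(\Z_{2^{n}})=2^{n+1}$ from Corollary~\ref{cor:cyclic}: since $3\nmid 2^{n}$ the $-1$ branch does not occur, and $b(2^{n})=2$ for $n\ge3$. You then adjoin two incomparable bottom points via Proposition~\ref{prop:timesZ2}. The closing sketch of an explicit crown-like $P_{1}$ is unverified and plays no role, so it is best dropped.
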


\begin{proof}
By Corollary~\ref{cor:cyclic} with $N=2^{n}$ ($n\ge3$): since $3\nmid 2^{n}$ the
``$-1$'' branch (which requires $3\mid N$) does not occur, and since
$2^{n}\notin\{2,4\}$ we have $b(2^{n})=2$, hence
$\beta(\Z_{2^{n}})=b(2^{n})\cdot 2^{n}=2\cdot 2^{n}=2^{\,n+1}$ (the same value
was obtained independently in~\cite{Gyenizse}). Let $P_{1}$ be a poset with
$\Aut(P_{1})\cong\Z_{2^{n}}$ and $|P_{1}|=\beta(\Z_{2^{n}})=2^{\,n+1}$. By
Proposition~\ref{prop:timesZ2} (applied with $A=\Z_{2^{n}}$, and using
$\Z_{2}\times\Z_{2^{n}}\cong\Z_{2^{n}}\times\Z_{2}$),
$\beta(\Z_{2}\times\Z_{2^{n}})=\beta(\Z_{2^{n}}\times\Z_{2})\le
|P_{1}|+2=2^{\,n+1}+2$.
\end{proof}

\begin{remark}
The lower bound $|P|\ge2^{\,n+1}+2$ (Corollary~\ref{cor:lower}) combines two
counts: $|R|\ge2m$ and $|Q|\ge2$. These rest on a chain of results, all
established in the remainder of this section; the following is a reading guide
to that chain, with forward references resolved below. The rigidity of free orbits
(Lemma~\ref{lem:transposition} $\to$ Lemma~\ref{lem:fixrigid} $\to$
Corollary~\ref{cor:sigrigid}) shows distinct points of a free $H$-orbit have
distinct full row-signatures. Combined with the antichain property
(Lemma~\ref{lem:antichain}), the kernel analysis on $G$-invariant free orbits
(Lemma~\ref{lem:kernel}), the dichotomy for $k$ on free orbits
(Lemma~\ref{lem:korbit}), and the two ``packaging'' lemmas that turn a swapping
automorphism or a $G$-fixed point into a uniform statement
(Lemmas~\ref{lem:rowsig} and~\ref{lem:uniform}), this yields $|Q|\ge2$
(Lemma~\ref{lem:Qge2}). Finally the transport conjugacy
(Lemma~\ref{lem:conj}) feeds Proposition~\ref{prop:notsingle}, which rules out a
single free orbit and so gives $|R|\ge2m$. Each contradiction is reached in one
of the three modes \textbf{(R)}, \textbf{(A)}, \textbf{(S)} catalogued before
Lemma~\ref{lem:Qge2}.
\end{remark}

\begin{definition}\label{def:sig}
For a finite poset $P$, two points $x,x'\in P$ have \emph{equal full
row-signatures} if, for every $y\in P$,
\[
x<y\iff x'<y\qquad\text{and}\qquad y<x\iff y<x' .
\]
(Taking $y\in\{x,x'\}$ shows $x\parallel x'$ when $x\neq x'$; and then
$x\parallel y\iff x'\parallel y$ for every $y$, incomparability being the
negation of the two comparabilities.) They have \emph{distinct} full
row-signatures when these conditions do not all hold.

Now let $R$ be the union of the free $H$-orbits of $P$ and $Q:=P\setminus R$ the
union of the non-free $H$-orbits. For an automorphism $a$ that preserves $Q$
(every automorphism does; see Remark~\ref{rem:invariance}), let
$a_{Q}\in\Sym(Q)$ be the permutation it induces on $Q$. The \emph{$Q$-signature
map} is $\Sig:R\to\{<,>,\parallel\}^{Q}$, where for $x\in R$ (so $x\notin Q$) and
$q\in Q$, $\Sig(x)(q)$ denotes the relation of $x$ to $q$ (the restriction to
$Q$ of $x$'s relations to $P$). We call any element of
$\{<,>,\parallel\}^{Q}$ a \emph{signature}, denoted by a bare $\Sigma$; thus
$\Sig(x)$ is the particular signature carried by the point $x$, while $\Sigma$
ranges over all signatures. We let $a_{Q}$ act on a signature
$\Sigma\in\{<,>,\parallel\}^{Q}$ by
\[
\bigl(a_{Q}\Sigma\bigr)(q)\;:=\;\Sigma\bigl(a^{-1}\cdot q\bigr),\qquad q\in Q ,
\]
i.e.\ by precomposition $\Sigma\mapsto\Sigma\circ(a_{Q})^{-1}$ (using
$(a_{Q})^{-1}(q)=a^{-1}\cdot q$ for $q\in Q$). This defines a group
homomorphism
\[
\Phi:\Sym(Q)\longrightarrow\Sym\bigl(\{<,>,\parallel\}^{Q}\bigr),\qquad
\Phi(\pi):\Sigma\mapsto\Sigma\circ\pi^{-1},
\]
and ``$a_{Q}$ acting on signatures'' means $\Phi(a_{Q})$; for a
$\Phi(a_{Q})$-invariant subset $S$ we write
$a_{Q}|_{S}:=\Phi(a_{Q})|_{S}\in\Sym(S)$. The \emph{transport equation} is
$\Sig(a\cdot x)=\Phi(a_{Q})\bigl(\Sig(x)\bigr)$. Three distinct permutation
actions thus appear and are kept notationally separate throughout: the action
$a_{Q}$ on the set $Q$; its image $\Phi(a_{Q})$ on the signature space
$\{<,>,\parallel\}^{Q}$; and the restriction $\Phi(a_{Q})|_{S}=a_{Q}|_{S}$ to a
$\Phi(a_{Q})$-invariant subset $S$ of that space.
\end{definition}

\begin{remark}\label{rem:invariance}
Since $G$ is abelian, $\Stab_{H}(g\cdot x)=\Stab_{H}(x)$ for $g\in G$, $x\in P$
(indeed $h'\cdot(g\cdot x)=g\cdot x\iff g\cdot(h'\cdot x)=g\cdot x\iff
h'\cdot x=x$). Hence ``$x$ lies in a free $H$-orbit'' is $G$-invariant, so $R$
and $Q$ are $G$-invariant. As $\Aut(P)=G$ (Convention~\ref{conv:realize}),
every automorphism preserves $Q$, so $a_{Q}$ is defined for all
$a\in\Aut(P)$. For an automorphism $a$ and $q\in Q$ one has
$a\cdot x\mathrel{\rho}q\iff x\mathrel{\rho}a^{-1}\cdot q$ for each
$\rho\in\{<,>,\parallel\}$; that is exactly
$\Sig(a\cdot x)(q)=\Sig(x)(a^{-1}\cdot q)=\bigl(\Phi(a_{Q})\Sig(x)\bigr)(q)$, the
transport equation.
\end{remark}

\begin{lemma}\label{lem:mcycle}
Let $c$ be an $m$-cycle in $\Sym(X)$, $|X|=m$.
\begin{enumerate}[label=\textup{(\arabic*)}]
\item For $0<j<m$, the power $c^{j}$ has no fixed point.
\item If $m=2^{n}$, the unique element of order $2$ in $\langle c\rangle$ is
$c^{m/2}$.
\end{enumerate}
\end{lemma}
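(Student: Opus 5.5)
For part (1), I will label $X$ by $\Z_{m}$ so that $c$ becomes the translation $x\mapsto x+1$; this is possible since an $m$-cycle acts transitively on its $m$ points. Then $c^{j}$ is the translation $x\mapsto x+j$. A fixed point would require $x+j\equiv x\pmod m$, i.e.\ $m\mid j$, which is impossible for $0<j<m$. Equivalently, without coordinates: if $c^{j}(x)=x$, then $c^{j}$ commutes with $c$ and so fixes every $c^{i}(x)$. Since these points exhaust $X$, we get $c^{j}=\id$. But $\ord(c)=m>j$, a contradiction.

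For part (2), I will use that $\langle c\rangle$ is cyclic of order $m=2^{n}$, because an $m$-cycle has order exactly $m$. An element $c^{j}$ with $0\le j<m$ satisfies $(c^{j})^{2}=\id$ if and only if $m\mid 2j$, i.e.\ $j\in\{0,m/2\}$. The value $j=0$ gives the identity, which has order $1$. So the only element of order $2$ is $c^{m/2}$, and it really has order $2$ because $0<m/2<m$ (using $m\ge 2$). This is just the general fact that a cyclic group of even order has a unique involution; the hypothesis $m=2^{n}$ is needed only to guarantee that $m$ is even.

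I do not expect any step to be a real obstacle. The only point needing care is to justify that $\ord(c)=m$ exactly: the powers $c^{i}(x)$ for $0\le i<m$ are pairwise distinct, so no smaller positive power of $c$ is the identity. Both parts rest on this fact.
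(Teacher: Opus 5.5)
Your proof is correct and follows essentially the same elementary route as the paper. For (1) you use the translation model $c^{j}(x)=x+j$ on $\Z_{m}$, or the commuting argument, which mirrors Lemma~\ref{lem:fixrigid}, where the paper reads off that $c^{j}$ splits into $\gcd(m,j)$ cycles of length $m/\gcd(m,j)\ge2$; for (2) you solve $m\mid 2j$ directly where the paper invokes the unique subgroup of order $2$ in a cyclic group of order $2^{n}$, and your remarks that only evenness of $m$ (with $m\ge2$) is needed and that both parts rest on $\ord(c)=m$ are accurate.
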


\begin{proof}
$c^{j}$ is a product of $\gcd(m,j)$ cycles, each of length $m/\gcd(m,j)$. For
$0<j<m$ we have $\gcd(m,j)<m$, so every cycle has length $\ge2$ and $c^{j}$
fixes nothing; this is~(1). For~(2), $\langle c\rangle$ is cyclic of order
$2^{n}$, hence has a unique subgroup of order $2$, generated by its unique
element of order $2$, namely $c^{m/2}$.
\end{proof}

\begin{lemma}\label{lem:transposition}
Let $P$ be a finite poset and let $x\neq x'\in P$ have equal full
row-signatures (Definition~\ref{def:sig}). Then the
transposition $\tau=(x\;x')$ is an automorphism of $P$.
\end{lemma}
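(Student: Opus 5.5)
The plan is to check directly that $\tau=(x\;x')$ is a bijection of $P$ that preserves and reflects the order. Since $\tau$ is an involution, $\tau^{-1}=\tau$, so it suffices to show that $u\le v$ implies $\tau(u)\le\tau(v)$ for all $u,v\in P$. Applying the same implication to $\tau(u),\tau(v)$ then gives the converse. First I record the consequence noted in Definition~\ref{def:sig}: from $x\neq x'$ and equal full row-signatures, taking $y=x'$ in the first condition gives $x<x'\iff x'<x'$. The right side is false, so $x\not<x'$, and symmetrically $x'\not<x$; hence $x\parallel x'$.

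Next I split into cases according to how many of $u,v$ lie in $\{x,x'\}$. If neither does, $\tau$ fixes both and there is nothing to prove. If $u=v$, then $\tau(u)=\tau(v)$ and reflexivity settles it. If exactly one lies in $\{x,x'\}$, say $u\in\{x,x'\}$ and $v\notin\{x,x'\}$ with $u<v$, then $\tau(u)$ is the other element of $\{x,x'\}$ and $\tau(v)=v$. The first signature condition with $y=v$ gives $\tau(u)<v$. The case $v\in\{x,x'\}$, $u\notin\{x,x'\}$ with $u<v$ is handled the same way by the second condition $y<x\iff y<x'$ with $y=u$. Finally, if $u,v$ are distinct and both in $\{x,x'\}$, then $u<v$ is impossible because $x\parallel x'$, so this case is vacuous.

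No step is a real obstacle. The only point needing care is that the signature conditions quantify over all $y\in P$, including $y\in\{x,x'\}$. The incomparability of $x$ and $x'$ must therefore be derived first, from $y=x'$ (or $y=x$), so that the mixed case with both points in $\{x,x'\}$ is correctly excluded, rather than assuming it tacitly.
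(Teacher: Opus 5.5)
Your proposal is correct and follows essentially the same route as the paper's proof. The paper also first derives $x\parallel x'$ from the signature condition at $y=x'$, then checks that $\tau$ preserves $\le$ case by case according to how many of $u,v$ lie in $\{x,x'\}$, and finally uses that $\tau$ is an involution to conclude it is an automorphism.
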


\begin{proof}
Taking $y=x'$ in $x<y\iff x'<y$ gives $x<x'\iff x'<x'$; since $x'<x'$ is false,
$x\not<x'$, and symmetrically $x'\not<x$. Hence $x\parallel x'$. Define
$\varphi$ by $\varphi(x)=x'$, $\varphi(x')=x$ and $\varphi(y)=y$ otherwise; it
is an involutive bijection. Let $u\le v$. If $\{u,v\}\cap\{x,x'\}=\varnothing$,
then $\varphi(u)=u\le v=\varphi(v)$. If $u=x$, $v\notin\{x,x'\}$, then
$x<v$, so $x'<v$ by signature equality, i.e.\ $\varphi(u)=x'\le v=\varphi(v)$
(the case $u=x'$ is symmetric). If $u\notin\{x,x'\}$, $v=x$, then $u<x$, so
$u<x'$, i.e.\ $\varphi(u)=u\le x'=\varphi(v)$ (the case $v=x'$ is symmetric).
The case $u,v\in\{x,x'\}$ can only be $u=v$ (as $x\parallel x'$), trivially
preserved. Thus $\varphi$ preserves $\le$; being an involution, it is an
automorphism.
\end{proof}

\begin{lemma}\label{lem:freeexists}
If $P$ realizes $G=\Z_{2}\times\Z_{2^{n}}$, then $R\neq\varnothing$.
\end{lemma}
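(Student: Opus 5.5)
We argue by contradiction: assume $R=\varnothing$, so every $H$-orbit of $P$ is non-free. This means every point $x\in P$ has $\Stab_{H}(x)\neq\{e\}$. Since $H=\langle h\rangle\cong\Z_{2^{n}}$ is a cyclic $2$-group, every nontrivial subgroup of $H$ contains the unique subgroup of order $2$, namely $\langle h^{m/2}\rangle$. This is the group-theoretic analogue of Lemma~\ref{lem:mcycle}(2) and follows by the same argument: a cyclic group of order $2^{n}$ has exactly one subgroup of each order. Hence $h^{m/2}\in\Stab_{H}(x)$ for every $x\in P$.

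So $h^{m/2}$ fixes every point of $P$, and the automorphism it induces is the identity. By Convention~\ref{conv:realize} the action of $G$ on $P$ is faithful, which forces $h^{m/2}=e$ in $G$. But $h$ has order $m=2^{n}$, so $h^{m/2}\neq e$, a contradiction. Therefore $R\neq\varnothing$.

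There is no serious obstacle. The only step needing care is the claim that every nontrivial subgroup of $\Z_{2^{n}}$ contains $h^{m/2}$. A nontrivial subgroup is $\langle h^{d}\rangle$ with $d\mid m$ and $d<m$, so $d\mid m/2$ and hence $h^{m/2}\in\langle h^{d}\rangle$. Note that the argument does not use the factor $K$ or $n\ge3$; it needs only that $H$ is a nontrivial cyclic $2$-group acting faithfully.
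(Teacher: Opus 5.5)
Your proposal is correct and follows essentially the same argument as the paper. Both use the fact that every nontrivial subgroup of the cyclic $2$-group $H$ contains $\langle h^{m/2}\rangle$, so $h^{m/2}$ would fix all of $P$, contradicting faithfulness.
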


\begin{proof}
The subgroups of $H\cong\Z_{2^{n}}$ form a single chain, with unique minimal
non-trivial subgroup $\langle h^{m/2}\rangle$ of order $2$ ($m/2=2^{n-1}$). If
no $H$-orbit were free, then $\Stab_{H}(x)\neq\{e\}$ for every $x\in P$, so each
$\Stab_{H}(x)$ contains $\langle h^{m/2}\rangle$. Then $h^{m/2}$ fixes every
point of $P$, i.e.\ acts as the identity automorphism, while $h^{m/2}\neq e$ in
$G$; this contradicts faithfulness (Convention~\ref{conv:realize}).
\end{proof}

The next two results isolate the rigidity of free orbits; they replace the
repeated ``construct an extra automorphism'' computations.

\begin{lemma}\label{lem:fixrigid}
Let $O$ be a free $H$-orbit and $\varphi\in G$. If $\varphi$ fixes some point
of $O$, then $\varphi$ fixes $O$ pointwise.
\end{lemma}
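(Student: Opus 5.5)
Since $G$ is abelian, the plan is to argue directly with commutativity. Let $O=H\cdot x_0$ be a free $H$-orbit and suppose $\varphi\in G$ fixes $x_0\in O$. Every point of $O$ has the form $h'\cdot x_0$ with $h'\in H$. Then
\[
\varphi\cdot(h'\cdot x_0)=h'\cdot(\varphi\cdot x_0)=h'\cdot x_0,
\]
because $\varphi h'=h'\varphi$ in $G=H\times K$. So $\varphi$ fixes every point of $O$.

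The same point can be phrased through stabilizers, as in Remark~\ref{rem:invariance}. The $G$-stabilizer is constant along $H$-orbits, since $\Stab_G(h'\cdot x)=h'\Stab_G(x)h'^{-1}=\Stab_G(x)$. Hence the set of elements fixing one point of $O$ equals the set fixing all points of $O$.

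Freeness of $O$ is not used, and no step is a real obstacle. The only thing to check is that the action is a genuine left action of the abelian group $G$ (Convention~\ref{conv:realize}), which justifies $\varphi\cdot(h'\cdot x_0)=(\varphi h')\cdot x_0=(h'\varphi)\cdot x_0$. Freeness becomes relevant only later. There it shows $\varphi$ cannot be a nontrivial element of $H$: such an element would fix $x_0$, contradicting $\Stab_H(x_0)=\{e\}$. Combined with the present lemma, this says any $\varphi$ fixing a point of $O$ lies in $G\setminus(H\setminus\{e\})$ and acts trivially on all of $O$.
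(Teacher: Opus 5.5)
Your argument is correct and is essentially the paper's proof: write each point of $O$ as an $H$-translate of $x_{0}$ and use that $\varphi$ commutes with $H$ in the abelian group $G$. You are also right that freeness is not needed; the paper invokes it only to write points of $O$ as $h^{j}\cdot x_{0}$, which holds for any $H$-orbit.
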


\begin{proof}
Say $\varphi(x_{0})=x_{0}$ with $x_{0}\in O$. As $O$ is a free $H$-orbit, every
$x\in O$ is $x=h^{j}\cdot x_{0}$ for some $j$. Since $G$ is abelian $\varphi$
commutes with $h$, so $\varphi(x)=h^{j}\cdot\varphi(x_{0})=h^{j}\cdot
x_{0}=x$.
\end{proof}

\begin{corollary}\label{cor:sigrigid}
Let $O$ be a free $H$-orbit of $P$. Then distinct points of $O$ have distinct
full row-signatures.
\end{corollary}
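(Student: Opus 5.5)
The argument is by contradiction, chaining Lemma~\ref{lem:transposition} and Lemma~\ref{lem:fixrigid}. Suppose $x\neq x'$ lie in the same free $H$-orbit $O$ and have equal full row-signatures.

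\textbf{Step 1.} By Lemma~\ref{lem:transposition}, the transposition $\tau=(x\;x')$ is an automorphism of $P$. Since $\Aut(P)=G$ (Convention~\ref{conv:realize}), we get $\tau\in G$.

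\textbf{Step 2.} I will check that $O$ has at least three points, so that $\tau$ fixes some point of $O$. As $O$ is a free $H$-orbit, $|O|=|H|=m=2^{n}\ge 8$. Hence there is $x_{0}\in O\setminus\{x,x'\}$, and $\tau(x_{0})=x_{0}$.

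\textbf{Step 3.} By Lemma~\ref{lem:fixrigid}, applied with $\varphi=\tau$, the element $\tau$ fixes $O$ pointwise. In particular $\tau(x)=x$. This contradicts $\tau(x)=x'\neq x$.

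Therefore distinct points of $O$ cannot share a full row-signature.

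The only point needing care is Step~2, the existence of a third point of $O$. It follows immediately from $m\ge 8$ (indeed $m\ge 3$ suffices), which is where the standing hypothesis $n\ge3$, or even just $n\ge 2$, is used. Alternatively, one can avoid counting by noting that $\tau$ commutes with $h$, so $\tau(h\cdot x)=h\cdot x'$. But with $m\ge 3$ the direct argument is cleanest.
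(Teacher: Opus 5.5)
Your proof is correct and follows the paper's argument essentially verbatim: Lemma~\ref{lem:transposition} puts $\tau=(x\;x')$ in $\Aut(P)=G$, a third point of $O\setminus\{x,x'\}$ exists because $|O|=m\ge 8$, and Lemma~\ref{lem:fixrigid} then forces $\tau$ to fix $O$ pointwise, contradicting $\tau(x)=x'\neq x$. Your observation that $m\ge 3$ already suffices also matches the paper's Remark~\ref{rem:n3}.
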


\begin{proof}
If distinct $x,x'\in O$ had equal full row-signatures, then by
Lemma~\ref{lem:transposition} the transposition $\tau=(x\;x')\in\Aut(P)=G$.
Since $|O|=m=2^{n}\ge8$, the set $O\setminus\{x,x'\}$ is nonempty (a third point
suffices); pick
$x''\in O\setminus\{x,x'\}$, which $\tau$ fixes. By Lemma~\ref{lem:fixrigid},
$\tau$ fixes $O$ pointwise, contradicting $\tau(x)=x'\neq x$.
\end{proof}

\begin{lemma}\label{lem:kernel}
Let $O$ be a free $H$-orbit that is invariant under $G$, and let
$\rho:G\to\Sym(O)$ be the restriction, $N:=\ker\rho$. Then $|N|=2$, and
$N=\langle g\rangle$ with $g\in\{k,\ h^{m/2}k\}$ (in particular $g\notin H$ and
$g$ fixes $O$ pointwise). Moreover $G\setminus H$ has exactly the two
involutions $k$ and $h^{m/2}k$, exactly one of which generates $N$; the other,
$g'$, satisfies $\rho(g')=\rho(h)^{m/2}$.
\end{lemma}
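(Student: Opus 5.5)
The plan is to analyze the restricted action $\rho$ on the $G$-invariant free orbit $O$ through the regular action of $H$ on $O$. Since $O$ is a free $H$-orbit of size $m$, $\rho(h)$ is an $m$-cycle $c$ on $O$. Because $G$ is abelian, every element of $\rho(G)$ commutes with $c$. I would then use Lemma~\ref{lem:fixrigid} for the key fact: the centralizer of the regular cyclic group $\langle c\rangle$ acts freely on $O$. Concretely, if $\rho(\varphi)$ fixes a point, then $\varphi$ fixes $O$ pointwise. It follows that $\rho(\varphi)$ is determined by the image of a single base point $x_0$, and that image lies in $O=H\cdot x_0$. So $\rho(\varphi)=\rho(h^j)$ for the unique $j$ with $\varphi(x_0)=h^j x_0$. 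Hence $\rho(G)=\rho(H)\cong\Z_{m}$, and since $\rho|_H$ is injective (freeness, Lemma~\ref{lem:mcycle}(1)), $|N|=|G|/m=2$ and $N\cap H=\{e\}$.

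Next I identify the generator of $N$. The nontrivial element $g\in N$ has order $2$ and lies outside $H$. The elements of $G\setminus H$ are $h^jk$, and $(h^jk)^2=h^{2j}$, which equals $e$ iff $j\in\{0,m/2\}$. So the involutions of $G\setminus H$ are exactly $k$ and $h^{m/2}k$, and therefore $g\in\{k,h^{m/2}k\}$. By definition of the kernel, $g$ fixes $O$ pointwise. Exactly one of the two involutions generates $N$: if both lay in $N$, their product $h^{m/2}$ would lie in $N\cap H=\{e\}$, which is a contradiction.

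For the last claim, let $g'$ be the other involution. Then $g'=h^{m/2}g$, so $\rho(g')=\rho(h)^{m/2}\rho(g)=\rho(h)^{m/2}$.

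The only step requiring care is the first: showing that $\rho(G)$ does not exceed $\rho(H)$, i.e.\ that the centralizer of a regular cyclic action is the cyclic group itself. I would argue this directly via Lemma~\ref{lem:fixrigid} applied to $\varphi h^{-j}$, which fixes $x_0$ and hence fixes $O$ pointwise.
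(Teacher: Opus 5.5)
Your proof is correct. It follows the same outline as the paper's: show $\rho(G)=\rho(H)$, deduce $|N|=2m/m=2$, then locate the non-identity element of $N$ among the involutions of $G\setminus H$. Two steps, however, are justified differently.

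For $\rho(G)=\rho(H)$, the paper quotes the standard fact that the centralizer of a full $m$-cycle in $\Sym(O)$ is the cyclic group it generates, and applies it to $\rho(k)$. You instead apply Lemma~\ref{lem:fixrigid} to $\varphi h^{-j}$. This is the same commutation argument, restricted to elements of $G$: it gives semiregularity of $\rho(G)$ rather than of the whole centralizer, but that is all you use. Since that lemma is already proved in the paper, your step is self-contained.

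For the last claim, the paper again uses the centralizer fact together with Lemma~\ref{lem:mcycle}(2) to force the non-trivial involution $\rho(g')$ to equal $\rho(h)^{m/2}$. You observe instead that $g'=h^{m/2}g$ whichever of $k,h^{m/2}k$ is $g$, so $\rho(g')=\rho(h)^{m/2}\rho(g)=\rho(h)^{m/2}$ directly. This is shorter and needs neither ingredient.

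Your enumeration of the involutions $h^{j}k$ with $h^{2j}=e$, and your ``exactly one'' argument via $N\cap H=\{e\}$, are equivalent to the paper's computation with $g=kh^{-s}$ and its use of $|N|=2$. The paper's route buys a single reusable fact about all permutations of $O$ commuting with $\rho(h)$. Yours relies only on results established earlier in the paper plus arithmetic in $G$.
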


\begin{proof}
Since $O$ is a free $H$-orbit (Definition~\ref{def:freeorbit}), $\rho|_{H}$ is
injective and $\rho(H)=\langle
\rho(h)\rangle$ is generated by an $m$-cycle. As $G$ is abelian $\rho(k)$
commutes with $\rho(h)$; the centralizer of an $m$-cycle in $\Sym(O)$, where
$|O|=m$, is the cyclic group it generates (a full $m$-cycle is a regular
permutation, so its centralizer in $\Sym(O)$ has order $m$ and therefore equals
$\langle\rho(h)\rangle$). Hence $\rho(k)=\rho(h)^{s}$ for some $s$, so
$\operatorname{Im}\rho=\rho(H)$ and $|N|=|G|/|\rho(H)|=2m/m=2$.

Write $g$ for the non-identity element of $N$, so $N=\langle g\rangle=\{e,g\}$.
From $\rho(k)=\rho(h)^{s}$, the
element $k\,h^{-s}$ satisfies $\rho(k\,h^{-s})=\rho(h)^{s}\rho(h)^{-s}=\id$, so
$k\,h^{-s}\in N$. Moreover $k\,h^{-s}\neq e$, since $k\,h^{-s}=e$ would give
$k=h^{s}\in H$, contradicting $k\notin H$; as $N=\{e,g\}$, this forces
$g=k\,h^{-s}$, and then $g=k\,h^{-s}\notin H$ (again because $k\notin H$ while
$h^{-s}\in H$). As $g\in N$ and $|N|=2$, $g^{2}=e$; expanding,
$g^{2}=k^{2}h^{-2s}=h^{-2s}=e$ (using $k^{2}=e$ and $G$ abelian), forcing
$2s\equiv0\pmod{m}$, so $s\in\{0,m/2\}$ and $g\in\{k,h^{m/2}k\}$; and
$N=\ker\rho$ fixes $O$ pointwise. The
elements of order $2$ in
$G=\Z_{2^{n}}\times\Z_{2}$ are $h^{m/2}$, $k$, $h^{m/2}k$; those outside $H$ are
exactly $k$ and $h^{m/2}k$. One generates $N$; for the other, $g'$, we have
$\rho(g')\neq\id$, and $\rho(g')$ is a non-trivial involution commuting with the
$m$-cycle $\rho(h)$, hence (centralizer again, and
Lemma~\ref{lem:mcycle}(2)) $\rho(g')=\rho(h)^{m/2}$.
\end{proof}

We record how $K$ may permute free $H$-orbits.

\begin{lemma}\label{lem:korbit}
$H$ fixes every $H$-orbit setwise, so $G/H\cong K\cong\Z_{2}$ acts on the set of
free $H$-orbits. For a free $H$-orbit $O$, either $k$ fixes $O$ setwise, in
which case $O$ is $G$-invariant and Lemma~\ref{lem:kernel} applies, or $k$ maps
$O$ onto a
different free $H$-orbit $O'$, in which case $O\cup O'$ is a single $G$-orbit of
size $2m$ and is an antichain by Lemma~\ref{lem:antichain}.
\end{lemma}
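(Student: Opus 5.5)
The plan is to treat the statement as a routine consequence of the fact that $G$ is abelian and $H$ has index $2$ in $G$. It has three parts. First, $H$ fixes every $H$-orbit setwise. Second, $G/H\cong K$ acts on the set of free $H$-orbits. Third, the dichotomy for $k$.

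For the first two parts, note that an $H$-orbit $H\cdot x$ is trivially mapped onto itself by every element of $H$. Next take $g\in G$. Because $G$ is abelian, $g\cdot(H\cdot x)=H\cdot(g\cdot x)$, so $g$ permutes $H$-orbits. By Remark~\ref{rem:invariance}, $\Stab_{H}(g\cdot x)=\Stab_{H}(x)$, so $g$ sends free $H$-orbits to free $H$-orbits. Since $H$ acts trivially on the set of $H$-orbits, the action factors through $G/H$. The composite $K\hookrightarrow G\to G/H$ is an isomorphism, because $G=H\times K$. So the action is a $\Z_{2}$-action, generated by the image of $k$.

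For the dichotomy, fix a free $H$-orbit $O$. The image $k\cdot O$ is again a free $H$-orbit, so either $k\cdot O=O$ or $k\cdot O=O'$ with $O'\neq O$; distinct orbits are disjoint.

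In the first case, $O$ is invariant under both generators $h$ and $k$. Hence it is $G$-invariant, and Lemma~\ref{lem:kernel} applies verbatim.

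In the second case, $k^{2}=e$ gives $k\cdot O'=O$, so $O\cup O'$ is invariant under $h$ and $k$, hence $G$-invariant. For transitivity, take $x\in O$. Then $G\cdot x\supseteq H\cdot x=O$ and $G\cdot x\ni k\cdot x\in O'$, so $G\cdot x\supseteq H\cdot(k\cdot x)=O'$. Combined with $G$-invariance, this gives $G\cdot x=O\cup O'$. Its size is $|O|+|O'|=2m$, because each free orbit has $m$ points and the two orbits are disjoint. Finally, $O\cup O'$ is a single $G$-orbit, so Lemma~\ref{lem:antichain} applied to $\Gamma=G$ shows it is an antichain.

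There is no real obstacle. The only point needing care is the second case: one must check that $O\cup O'$ is a single $G$-orbit, not merely a $G$-invariant set. The antichain conclusion depends on this, and the explicit transitivity argument above supplies it.
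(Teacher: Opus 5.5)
Your proof is correct and takes the same route as the paper's. Both get $G$-invariance of the free orbits from Remark~\ref{rem:invariance}, factor the action through $G/H$, and in the swapping case identify $G\cdot x$ with $O\cup O'$; the paper reads this off the coset decomposition as $H\cdot x\cup kH\cdot x$, while you use containment plus $G$-invariance, which is only a cosmetic difference.
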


\begin{proof}
$R$ is $G$-invariant (Remark~\ref{rem:invariance}) and $H$ acts trivially on
the set of $H$-orbits, so the $G$-action on free $H$-orbits factors through
$G/H\cong\Z_{2}$. If $k(O)=O$ then $G=\langle H,k\rangle$ preserves $O$. If
$k(O)=O'\neq O$ then for $x\in O$, $G\cdot x=H\cdot x\cup kH\cdot x=O\cup O'$, a
single $G$-orbit of size $2m$, which is an antichain by
Lemma~\ref{lem:antichain}.
\end{proof}

The next lemma makes explicit, once, the passage repeatedly used below from
``$g$ swaps $y,y'$ and fixes the rest'' to ``$y,y'$ have equal full
row-signatures''.

\begin{lemma}\label{lem:rowsig}
Let $O^{*}$ be an $H$-orbit of $P$ and $y\neq y'\in O^{*}$. Suppose there is
$g\in\Aut(P)$ with $g\cdot y=y'$ that fixes every point of $P\setminus O^{*}$.
Then $y$ and $y'$ have equal full row-signatures.
\end{lemma}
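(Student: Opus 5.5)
The plan is to check the two defining conditions of Definition~\ref{def:sig} directly, for every $z\in P$, by splitting into the cases $z\notin O^{*}$ and $z\in O^{*}$. No further structure of $G$ is needed. We only use that $g$ is an order-automorphism and that $O^{*}$, being an orbit of the action of $H$ by order-automorphisms, is an antichain by Lemma~\ref{lem:antichain} (applied with $\Gamma=H$).

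\emph{Case $z\notin O^{*}$.} Here $g\cdot z=z$ by hypothesis. Since $g$ is an automorphism with $g\cdot y=y'$, we get
\[
y<z\iff g\cdot y<g\cdot z\iff y'<z,
\qquad
z<y\iff g\cdot z<g\cdot y\iff z<y'.
\]
This is the step where the hypothesis ``$g$ fixes every point of $P\setminus O^{*}$'' is used, and it is the only substantive step.

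\emph{Case $z\in O^{*}$.} By Lemma~\ref{lem:antichain} the orbit $O^{*}$ is an antichain. Hence for $z\in O^{*}$ none of $y<z$, $y'<z$, $z<y$, $z<y'$ holds. If $z$ differs from the point in question, this is incomparability within the antichain. If $z\in\{y,y'\}$ coincides with it, this is irreflexivity of $<$; for instance, with $z=y'$ the relation $y'<y'$ fails, and $y<y'$ fails by the antichain property. Thus both biconditionals hold, with both sides false.

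Combining the two cases, the conditions of Definition~\ref{def:sig} hold for every $z\in P$, so $y$ and $y'$ have equal full row-signatures. I expect no real obstacle. The only point needing care is that the case $z\in O^{*}$ cannot be handled by transport under $g$, since $g$ may move points of $O^{*}$ other than $y$; it is instead handled by the antichain property, which is why the hypothesis is restricted to $P\setminus O^{*}$.
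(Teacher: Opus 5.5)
Your proof is correct and matches the paper's argument: points of $P\setminus O^{*}$ are handled by transporting along $g$, which fixes them, and points of $O^{*}$ by the antichain property from Lemma~\ref{lem:antichain}, with irreflexivity covering $z\in\{y,y'\}$. The paper uses the same decomposition $P=O^{*}\sqcup(P\setminus O^{*})$.
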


\begin{proof}
$P$ is the disjoint union $O^{*}\sqcup(P\setminus O^{*})$, so the full
row-signature of a point of $O^{*}$ is determined by its relations to
$O^{*}$ and to $P\setminus O^{*}$.

\emph{Relations to $O^{*}$.} By Lemma~\ref{lem:antichain} the $H$-orbit $O^{*}$
is an antichain, so each of $y$ and $y'$ is incomparable to every other point of
$O^{*}$ (and $y\parallel y'$). Thus $y$ and $y'$ have identical relations to all
of $O^{*}$.

\emph{Relations to $P\setminus O^{*}$.} Let $z\in P\setminus O^{*}$. Then
$g\cdot z=z$, and since $g$ is an order-automorphism with $g\cdot y=y'$,
\[
y\mathrel{\rho}z\iff g\cdot y\mathrel{\rho}g\cdot z\iff y'\mathrel{\rho}z
\qquad(\rho\in\{<,>,\parallel\}).
\]
Hence $y$ and $y'$ have identical relations to every $z\in P\setminus O^{*}$.

Combining the two parts, $y$ and $y'$ have equal full row-signatures.
\end{proof}

\begin{lemma}\label{lem:uniform}
Suppose $|Q|=1$, say $Q=\{q\}$. Then every element of $G$ fixes $q$, and for
any subgroup $\Gamma\le G$ and any $\Gamma$-orbit $T\subseteq P$, all points of
$T$ have the same relation to $q$ (all $<q$, all $>q$, or all $\parallel q$).
\end{lemma}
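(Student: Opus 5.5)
The plan is to derive both assertions directly from the invariance of $Q$ (Remark~\ref{rem:invariance}) and the fact that automorphisms preserve the order relation; no freeness or kernel analysis is needed.

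\emph{First assertion.} By Remark~\ref{rem:invariance}, $Q$ is $G$-invariant, since $G$ is abelian and freeness of an $H$-orbit is a $G$-invariant property. So for every $g\in G$ we have $g\cdot q\in Q=\{q\}$, which forces $g\cdot q=q$. Thus $q$ is a $G$-fixed point, and in particular it is fixed by every element of any subgroup $\Gamma\le G$.

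\emph{Second assertion.} Let $T$ be a $\Gamma$-orbit and take $x,y\in T$, so that $y=\gamma\cdot x$ for some $\gamma\in\Gamma$. Since $\gamma$ is an order-automorphism and $\gamma\cdot q=q$, for each $\rho\in\{<,>,\parallel\}$ we get
\[
x\mathrel{\rho}q\iff \gamma\cdot x\mathrel{\rho}\gamma\cdot q\iff y\mathrel{\rho}q .
\]
For $\rho\in\{<,>\}$ this is just preservation of $<$ in both directions, as $\gamma$ and $\gamma^{-1}$ are both automorphisms. For $\rho={\parallel}$ it follows because incomparability is the negation of the two comparabilities. This is exactly the computation of Lemma~\ref{lem:rowsig} restricted to the single point $z=q$. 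Hence all points of $T$ stand in the same relation to $q$.

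One degenerate case needs care: if $q\in T$, then $T=\{q\}$ because $q$ is $\Gamma$-fixed, and the claim is trivial (or vacuous, depending on whether "relation to $q$" is read as including equality). I will note this explicitly. I expect no real obstacle; the only subtle point is citing Remark~\ref{rem:invariance} for the $G$-invariance of $Q$ rather than assuming it.
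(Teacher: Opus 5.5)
Your proposal is correct and follows essentially the same route as the paper. The $G$-invariance of $Q$ (Remark~\ref{rem:invariance}) forces $q$ to be $G$-fixed, and then order-preservation by each $\gamma\in\Gamma$ with $\gamma\cdot q=q$ carries the relation of $x$ to $q$ across the whole orbit, with the degenerate case $T=\{q\}$ set aside exactly as the paper does.
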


\begin{proof}
$Q$ is $G$-invariant (Remark~\ref{rem:invariance}); being a singleton it is
fixed pointwise, so $g\cdot q=q$ for every $g\in G$. Fix $x\in T$; since $T$ is a
$\Gamma$-orbit we have $T=\Gamma\cdot x=\{\gamma\cdot x:\gamma\in\Gamma\}$.
Because $q$ is $\Gamma$-fixed, its orbit is $\{q\}$, so the orbit $T$ either
equals $\{q\}$ (whereupon the claim is trivial) or avoids $q$; assume the
latter, so $x\neq q$. Then exactly one of $x<q$, $x>q$, $x\parallel q$ holds
(trichotomy for distinct points of a poset); let $\rho$ be that relation. For
any $\gamma\in\Gamma$, $\gamma$ is an order-automorphism and $\gamma\cdot q=q$,
so
\[
x\mathrel{\rho}q\iff\gamma\cdot x\mathrel{\rho}\gamma\cdot q
\iff\gamma\cdot x\mathrel{\rho}q ,
\]
and since $x\mathrel{\rho}q$ holds, so does $\gamma\cdot x\mathrel{\rho}q$. As
$\gamma\cdot x$ ranges over all of $T$, every point of $T$ bears the relation
$\rho$ to $q$.
\end{proof}

In each step below we reach a contradiction in one of three uniform ways.
\textbf{(R)} Two distinct points of a free $H$-orbit acquire equal full
row-signatures, impossible by Corollary~\ref{cor:sigrigid}. \textbf{(A)} There
is an antichain $Y\subseteq P$ with $|Y|\ge m$ such that every point of $Y$
bears the same relation to each point of $P\setminus Y$; then every permutation
$\pi$ of $Y$ extends to an automorphism of $P$. Indeed, let $\varphi$ agree with
$\pi$ on $Y$ and fix $P\setminus Y$ pointwise. To see $\varphi$ preserves
$\le$, take $u\le v$: if $u,v\in Y$ then $u=v$ (an antichain has no strict
relations), so $\varphi(u)=\varphi(v)$; if $u,v\in P\setminus Y$ then
$\varphi$ fixes both; if $u\in Y$, $v\in P\setminus Y$, then by hypothesis every
point of $Y$, in particular $\pi(u)$, bears the same relation to $v$ as $u$ does,
so the relation is preserved; if $u\in P\setminus Y$, $v\in Y$, then by
hypothesis every point of $Y$, in particular $\pi(v)$, bears the same relation to
$u$ as $v$ does, so the relation is preserved. The same
verification applied to $\pi^{-1}$ shows $\varphi^{-1}$ also preserves $\le$;
hence $\varphi$ is an order-automorphism. Distinct $\pi$ give distinct
$\varphi$, so $|\Aut(P)|\ge|Y|!\ge m!>2m=|G|$ (the inequality $m!>2m$ holds for
every $m=2^{n}$ with $n\ge3$, already $8!>16$), whence $\Aut(P)\not\cong G$.
\textbf{(S)} A size estimate violates minimality.

\begin{lemma}\label{lem:Qge2}
Let $n\ge3$ and let $P$ be a minimal poset realizing $G=\Z_{2}\times\Z_{2^{n}}$.
Then $|Q|\ge2$.
\end{lemma}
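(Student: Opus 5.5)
The plan is to argue by contradiction: assume $|Q|\le1$ and reach one of the modes \textbf{(R)} or \textbf{(A)}. Minimality and Lemma~\ref{lem:upper} give $|P|=\beta(G)\le 2m+2$. Since $R$ is a disjoint union of free $H$-orbits, each of size $m$, and $R\neq\varnothing$ by Lemma~\ref{lem:freeexists}, the hypothesis $|Q|\le1$ forces $R$ to consist of exactly one or exactly two free $H$-orbits. The proof then splits according to how $k$ acts on these orbits, as in Lemma~\ref{lem:korbit}. When $|Q|=1$, Lemma~\ref{lem:uniform} says $q$ is $G$-fixed, and every $H$-orbit and every $G$-orbit bears a uniform relation to $q$.

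\emph{One free orbit $O$.} Here $P=O$ or $P=O\cup\{q\}$. By Lemma~\ref{lem:antichain}, $O$ is an antichain of size $m$. If $Q=\{q\}$, Lemma~\ref{lem:uniform} with $\Gamma=H$ shows all points of $O$ have the same relation to $q$. So $Y=O$ satisfies the hypotheses of mode \textbf{(A)}, and $|\Aut(P)|\ge m!>2m$, which is a contradiction.

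\emph{Two free orbits $O,O'$ swapped by $k$.} By Lemma~\ref{lem:korbit}, $Y=O\cup O'$ is a single $G$-orbit and an antichain of size $2m\ge m$. Again $P\setminus Y\subseteq Q$, and Lemma~\ref{lem:uniform} with $\Gamma=G$ gives a uniform relation of $Y$ to $q$ when $q$ exists. This is again mode \textbf{(A)}.

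\emph{Two $G$-invariant free orbits $O_1,O_2$.} By Lemma~\ref{lem:kernel}, each kernel $N_i=\ker(G\to\Sym(O_i))$ is generated by some $g_i\in\{k,h^{m/2}k\}$.
\begin{itemize}[label=--]
\item If $g_1=g_2$, then $g_1$ fixes $O_1$ and $O_2$ pointwise, and it also fixes $Q$ (which is empty or the $G$-fixed point $q$). So $g_1$ acts trivially on $P$ although $g_1\neq e$, contradicting faithfulness.
\item If $g_1\neq g_2$, then $g_1$ is the "other" involution $g'$ for $O_2$. Lemma~\ref{lem:kernel} gives $\rho_2(g_1)=\rho_2(h)^{m/2}$, so $g_1\cdot y=h^{m/2}\cdot y\neq y$ for $y\in O_2$. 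At the same time $g_1$ fixes every point of $P\setminus O_2=O_1\cup Q$.
\end{itemize}
In the second subcase, Lemma~\ref{lem:rowsig} applied with $O^{*}=O_2$ shows that $y$ and $h^{m/2}\cdot y$ are distinct points of the free orbit $O_2$ with equal full row-signatures. This contradicts Corollary~\ref{cor:sigrigid}, which is mode \textbf{(R)}.

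All cases are contradictory, so $|Q|\ge2$. The main point needing care is the last case: one must check that $g_1$ really fixes all of $P\setminus O_2$, which requires the $G$-invariance of $Q$ from Remark~\ref{rem:invariance} and the singleton structure of $Q$. One must also check that the case $g_1=g_2$ is excluded cleanly by faithfulness.
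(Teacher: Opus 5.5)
Your proof is correct and follows the paper's case structure: the size reduction to one or two free orbits, the swap/fix dichotomy for $k$, and the faithfulness-plus-Lemma~\ref{lem:rowsig} argument for two $G$-invariant orbits all match the paper. The only difference is the case of a single free orbit $O$ with $Q=\{q\}$. There you apply mode \textbf{(A)} directly, since $O$ is an antichain and every point of $O$ has the same relation to $q$ by Lemma~\ref{lem:uniform}; the paper instead uses Lemma~\ref{lem:kernel} to get a fixed-point-free involution on $O$ and reaches mode \textbf{(R)}. Your route is slightly more elementary there and handles $|Q|=0$ and $|Q|=1$ the same way.
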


\begin{proof}
Let $r$ be the number of free $H$-orbits, so $|R|=rm$; by
Lemma~\ref{lem:freeexists}, $r\ge1$, and by minimality with
Lemma~\ref{lem:upper}, $|P|\le2^{\,n+1}+2$. Assume $|Q|\le1$. The argument is
exhaustive: by the size reduction $r\in\{1,2\}$, and for each value of $r$ the
group $K=\langle k\rangle$ either fixes every free $H$-orbit setwise or swaps
two of them (Lemma~\ref{lem:korbit}); we show each of the resulting
configurations, with $|Q|\in\{0,1\}$, falls into mode \textbf{(R)},
\textbf{(A)}, or \textbf{(S)}.

\emph{Size reduction.} If $r\ge3$ then
$|P|\ge rm\ge3\cdot2^{n}>2^{\,n+1}+2$ (the gap is $2^{n}-2\ge6$), so by
\textbf{(S)} we may assume $r\in\{1,2\}$.

\emph{The non-free point.} If $|Q|=1$ we write $Q=\{q\}$ and use
Lemma~\ref{lem:uniform}: every element of $G$ fixes $q$, and the points of any
single $H$- or $G$-orbit all bear the same relation to $q$.

\medskip
\noindent\textbf{Case $r=1$.} Let $O$ be the unique free $H$-orbit; $k$ must fix
$O$ setwise (Lemma~\ref{lem:korbit}), so $O$ is $G$-invariant.
\emph{If $|Q|=0$}: then $P=O$ (the unique $H$-orbit), a single $G$-orbit, hence
an antichain by Lemma~\ref{lem:antichain}; take $Y=P$ ($|Y|=m\ge8$), where
$P\setminus Y=\varnothing$ makes the uniformity hypothesis of \textbf{(A)}
vacuous, giving mode \textbf{(A)}.
\emph{If $|Q|=1$}: by Lemma~\ref{lem:kernel} pick the involution
$g'\in G\setminus H$ with $\rho_{O}(g')=\rho_{O}(h)^{m/2}$, which by
Lemma~\ref{lem:mcycle}(1) is a fixed-point-free involution of $O$; let
$\{y,y'\}$ be one of its $2$-cycles, so $g'\cdot y=y'$ with $y\neq y'\in O$.
Here $P\setminus O=\{q\}$ and $g'$ fixes $q$ (every element of $G$ fixes the
singleton $Q$), so $g'$ fixes $P\setminus O$ pointwise. By
Lemma~\ref{lem:rowsig}, $y$ and $y'$ have equal full row-signatures; as $O$ is
a free $H$-orbit this contradicts Corollary~\ref{cor:sigrigid}, giving mode
\textbf{(R)}.

\medskip
\noindent\textbf{Case $r=2$.} Let the free $H$-orbits be $O_{1},O_{2}$.

\emph{Sub-case $k$ swaps $O_{1},O_{2}$.} By Lemma~\ref{lem:korbit}, $W:=O_{1}\cup
O_{2}$ is a single $G$-orbit of size $2m$ and an antichain. If $|Q|=0$ then
$P=W$; take $Y=P$ ($|Y|=2m\ge8$, $P\setminus Y=\varnothing$), giving mode
\textbf{(A)}. If $|Q|=1$ then
$P=\{q\}\cup W$; by Lemma~\ref{lem:uniform} (applied to the $G$-orbit $W$) all
points of $W$ bear the same relation to $q$, and $W$ is an antichain, so every
permutation of $W$ extends (fixing $q$) to an automorphism of $P$; take $Y=W$
($|Y|=2m\ge8$), giving mode \textbf{(A)}.

\emph{Sub-case $k$ fixes $O_{1},O_{2}$ setwise.} Then each $O_{i}$ is
$G$-invariant; let $\rho_{i}:G\to\Sym(O_{i})$, $N_{i}=\ker\rho_{i}$. By
Lemma~\ref{lem:kernel}, $|N_{i}|=2$ and $N_{i}\not\subseteq H$. If
$N_{1}=N_{2}$, the non-trivial common element fixes $O_{1},O_{2}$ pointwise (and
$q$, if present), hence all of $P$, contradicting faithfulness; so
$N_{1}\neq N_{2}$. Pick $g\in N_{1}\setminus\{e\}$. Since $g\notin N_{2}$,
$\rho_{2}(g)\neq\id$ is a non-trivial involution commuting with the $m$-cycle
$\rho_{2}(h)$, so by Lemmas~\ref{lem:kernel} and~\ref{lem:mcycle},
$\rho_{2}(g)=\rho_{2}(h)^{m/2}$, a fixed-point-free involution of $O_{2}$; let
$\{y,y'\}$ be one of its $2$-cycles, so $g\cdot y=y'$ with $y\neq y'\in O_{2}$.
Now $P\setminus O_{2}=O_{1}\,(\cup\{q\})$, and $g$ fixes $O_{1}$ pointwise
(as $g\in N_{1}=\ker\rho_{1}$) and fixes $q$ (every element of $G$ fixes $q$);
so $g$ fixes $P\setminus O_{2}$ pointwise. By Lemma~\ref{lem:rowsig}, $y$ and
$y'$ have equal full row-signatures; as $O_{2}$ is a free $H$-orbit this
contradicts Corollary~\ref{cor:sigrigid}, giving mode \textbf{(R)}.

\medskip
Every configuration with $|Q|\le1$ is impossible; hence $|Q|\ge2$.
\end{proof}

The following lemma isolates the key step of Proposition~\ref{prop:notsingle}:
the transport equation turns the $h$-action on a free orbit into a conjugate
$m$-cycle on its set of $Q$-signatures.

\begin{lemma}\label{lem:conj}
Let $O$ be a $G$-invariant free $H$-orbit, and suppose the map
$\iota:O\to\{<,>,\parallel\}^{Q}$, $\iota(x)=\Sig(x)$, is injective. Put
$S:=\iota(O)$, so $\iota:O\to S$ is a bijection and $|S|=m$. Then $S$ is
$\Phi(h_{Q})$-invariant. Write $h_{Q}|_{S}:=\Phi(h_{Q})|_{S}$ for the
restriction of $\Phi(h_{Q})$ to $S$ (Definition~\ref{def:sig}). Then
\[
\iota\circ\rho_{O}(h)\;=\;\bigl(h_{Q}|_{S}\bigr)\circ\iota .
\]
Consequently $h_{Q}|_{S}=\iota\circ\rho_{O}(h)\circ\iota^{-1}$ is conjugate to
$\rho_{O}(h)$; since $\rho_{O}(h)$ is an $m$-cycle, so is $h_{Q}|_{S}$, and
$\ord\bigl(h_{Q}|_{S}\bigr)=m$.
\end{lemma}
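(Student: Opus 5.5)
The whole lemma comes down to the transport equation of Remark~\ref{rem:invariance}, applied pointwise on $O$. First I will show $S$ is $\Phi(h_{Q})$-invariant. Take $\Sigma\in S$ and write $\Sigma=\iota(x)$ with $x\in O$. The transport equation with $a=h$ gives
\[
\Phi(h_{Q})(\Sigma)=\Phi(h_{Q})\bigl(\Sig(x)\bigr)=\Sig(h\cdot x)=\iota(h\cdot x).
\]
Since $O$ is an $H$-orbit, $h\cdot x\in O$, so $\Phi(h_{Q})$ maps $S$ into $S$. Because $\Phi(h_{Q})$ is a bijection of the finite signature space and $S$ is finite, this map $S\to S$ is injective and hence a bijection of $S$. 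So the restriction $h_{Q}|_{S}\in\Sym(S)$ is well defined.

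The same computation is the intertwining identity. For every $x\in O$,
\[
(h_{Q}|_{S})(\iota(x))=\iota(h\cdot x)=\iota(\rho_{O}(h)(x)),
\]
which is exactly $\iota\circ\rho_{O}(h)=(h_{Q}|_{S})\circ\iota$ as maps $O\to S$. Since $\iota:O\to S$ is a bijection by hypothesis, composing on the right with $\iota^{-1}$ gives $h_{Q}|_{S}=\iota\circ\rho_{O}(h)\circ\iota^{-1}$.

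For the consequences, note that $O$ is a free $H$-orbit of size $m$, so $\rho_{O}(h)$ is an $m$-cycle on $O$, as in the proof of Lemma~\ref{lem:kernel}. Transporting a permutation along a bijection preserves its cycle structure: the $m$-cycle $(x_{0}\;h x_{0}\;\cdots\;h^{m-1}x_{0})$ becomes $(\iota x_{0}\;\iota(hx_{0})\;\cdots)$. Hence $h_{Q}|_{S}$ is an $m$-cycle on the $m$-element set $S$, and it has order $m$ because conjugation preserves order.

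There is no real obstacle here. The only point needing care is the direction of the action: $\Phi$ acts by precomposition with $(a_{Q})^{-1}$, and Remark~\ref{rem:invariance} already checks that this gives $\Sig(a\cdot x)=\Phi(a_{Q})\Sig(x)$ rather than the inverse. The hypothesis that $O$ is $G$-invariant is not needed beyond $H$-invariance, which every $H$-orbit has.
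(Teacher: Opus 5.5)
Your proof is correct and is essentially the paper's: both apply the transport equation $\Sig(h\cdot x)=\Phi(h_{Q})\bigl(\Sig(x)\bigr)$ pointwise on $O$ to get $\Phi(h_{Q})$-invariance of $S$ and the intertwining identity, then deduce the $m$-cycle structure by conjugation. The only cosmetic difference is that you get surjectivity of $\Phi(h_{Q})$ on $S$ from injectivity plus finiteness, whereas the paper notes that $h\cdot x$ ranges over all of $O$. Your remark that only $H$-invariance of $O$ is needed is also accurate.
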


\begin{proof}
By the transport equation (Remark~\ref{rem:invariance}),
$\Sig(h\cdot x)=\Phi(h_{Q})\bigl(\Sig(x)\bigr)$ for every $x\in O$, i.e.\
$\iota(h\cdot x)=\Phi(h_{Q})\bigl(\iota(x)\bigr)$. As $x$ ranges over $O$, so does
$h\cdot x$, and $\iota$ is a bijection onto $S$; hence $\Phi(h_{Q})$ maps $S$ onto
$S$, and the displayed identity holds on all of $O$. Therefore
$h_{Q}|_{S}=\iota\circ\rho_{O}(h)\circ\iota^{-1}$. Conjugate permutations have
the same cycle type; $\rho_{O}(h)$ is an $m$-cycle because $O$ is a free
$H$-orbit of size $m$, so $h_{Q}|_{S}$ is an $m$-cycle and has order $m$.
\end{proof}

\begin{proposition}\label{prop:notsingle}
Let $n\ge3$ and let $P$ be a minimal poset realizing
$G=\Z_{2}\times\Z_{2^{n}}$. Then the union $R$ of free $H$-orbits does not
consist of a single free orbit.
\end{proposition}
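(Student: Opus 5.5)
We argue by contradiction: assume $R=O$ is a single free $H$-orbit. The plan is to compare the order of $h$ on $O$ (which is $m$) with its order on $Q$ (which is at most $m/2$, since every $H$-orbit in $Q$ is non-free). Lemma~\ref{lem:conj} links the two through $Q$-signatures. Minimality is barely needed.

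\emph{Step 1: $O$ is $G$-invariant.} By Lemma~\ref{lem:korbit}, $k$ either fixes $O$ setwise or maps it onto a different free $H$-orbit. There is no other free orbit, so $k(O)=O$ and $O$ is $G$-invariant.

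\emph{Step 2: the signature map $\iota$ is injective on $O$.} Take distinct $x,x'\in O$.
\begin{itemize}
\item By Lemma~\ref{lem:antichain}, $O$ is an antichain. So $x$ and $x'$ are both incomparable to every other point of $O$, and to each other. Their relations to $O$ therefore agree.
\item Since $P=O\sqcup Q$, their full row-signatures are determined by their relations to $Q$, that is, by $\Sig(x)$ and $\Sig(x')$.
\item By Corollary~\ref{cor:sigrigid}, distinct points of $O$ have distinct full row-signatures, so $\Sig(x)\neq\Sig(x')$.
\end{itemize}
Hence $\iota=\Sig|_{O}$ is injective. This also covers $Q=\varnothing$: then there is only one signature, injectivity fails, and we are in mode \textbf{(R)}. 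So we may take $Q\neq\varnothing$.

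\emph{Step 3: $h_{Q}$ has order at least $m$.} Lemma~\ref{lem:conj} applies by Steps 1 and 2. It gives that $h_{Q}|_{S}=\Phi(h_{Q})|_{S}$ is an $m$-cycle, of order $m$. Since $\Phi$ is a homomorphism, the order of $\Phi(h_{Q})$ divides the order of $h_{Q}$ in $\Sym(Q)$. The order of $h_Q|_S$ divides the order of $\Phi(h_Q)$. Hence $m$ divides $\ord(h_{Q})$.

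\emph{Step 4: $h_Q$ has order at most $m/2$.} The cycles of $h_{Q}$ on $Q$ are exactly the $H$-orbits contained in $Q$. Each such orbit is non-free, so its stabilizer in $H$ is a non-trivial subgroup. Every non-trivial subgroup of $H\cong\Z_{2^{n}}$ contains $\langle h^{m/2}\rangle$, so $h^{m/2}$ fixes every point of $Q$. (This is the argument of Lemma~\ref{lem:freeexists}, applied pointwise on $Q$.) Thus $h_{Q}^{m/2}=\id_{Q}$ and $\ord(h_{Q})\mid m/2$.

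\emph{Conclusion.} Steps 3 and 4 give $m\mid m/2$, which is impossible. So $R$ is not a single free orbit.

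The only delicate point is Step 3: the orders must be compared through the chain $\Sym(Q)\to\Sym(\{<,>,\parallel\}^{Q})\to\Sym(S)$, which rests on $\Phi$ being a homomorphism and $S$ being $\Phi(h_{Q})$-invariant. Once Lemma~\ref{lem:conj} is in hand, the rest is bookkeeping. Minimality enters only to make Corollary~\ref{cor:sigrigid} available via $\Aut(P)=G$, and no size estimate is needed.
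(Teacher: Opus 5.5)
Your proof is correct and follows the paper's argument. The steps match: $O$ is $G$-invariant, $\Sig$ is injective on $O$ by Corollary~\ref{cor:sigrigid}, and Lemma~\ref{lem:conj} forces $m\mid\ord(h_{Q})$, which contradicts $\ord(h_{Q})\mid m/2$. Your Step~4, which uses the minimal subgroup $\langle h^{m/2}\rangle$, is equivalent to the paper's cycle-length count. The only difference is that you dispose of $Q=\varnothing$ directly rather than citing Lemma~\ref{lem:Qge2}, so your argument uses only $\Aut(P)=G$ and never minimality. Your closing remark about where minimality enters is therefore slightly off: Corollary~\ref{cor:sigrigid} needs only that $P$ realizes $G$, not that $P$ is minimal.
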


\begin{proof}
Assume $R=O$ is one free $H$-orbit, $|O|=m$. Then $k$ fixes $O$ setwise (it is
the only free $H$-orbit), so $O$ is $G$-invariant; let $\rho:=\rho_{O}:G\to
\Sym(O)$ be the restriction of the action to $O$. As $O$ is a free $H$-orbit of
size $m$, $\rho(h)$ is an $m$-cycle (the only fact about $\rho$ used below). By
Lemma~\ref{lem:Qge2}, $|Q|\ge2$; in particular $Q\neq\varnothing$, and
$P=Q\cup O$.

\medskip
\noindent\textbf{Case (A): some distinct $x,x'\in O$ have
$\Sig(x)=\Sig(x')$.} Since $O$ is an antichain (Lemma~\ref{lem:antichain}) and
$P=Q\cup O$, the relations of $x$ to $P$ are: incomparability to every other
point of $O$ (in particular $x\parallel x'$), and $\Sig(x)$ on $Q$; likewise for
$x'$. As $\Sig(x)=\Sig(x')$, $x$ and $x'$ have equal full row-signatures,
contradicting Corollary~\ref{cor:sigrigid}.

\medskip
\noindent\textbf{Case (B): $x\mapsto\Sig(x)$ is injective on $O$.}
By Lemma~\ref{lem:conj} (applicable since $O$ is a $G$-invariant free
$H$-orbit), with $S=\{\Sig(x):x\in O\}$, the restriction $h_{Q}|_{S}$ is an
$m$-cycle, so
\begin{equation}\label{eq:ordm}
\ord\bigl(h_{Q}|_{S}\bigr)=m.
\end{equation}

We bound $\ord\bigl(h_{Q}|_{S}\bigr)$ from above in two steps; here it is
essential to distinguish $h_{Q}$ as a permutation of the \emph{set} $Q$ from
the permutation it induces on the \emph{signature space}
$\{<,>,\parallel\}^{Q}$.

\emph{Step 1 (the induced action on signatures is a homomorphic image).} By
Definition~\ref{def:sig}, $h_{Q}$ acts on the signature space
$\{<,>,\parallel\}^{Q}$ as $\Phi(h_{Q})$, where
$\Phi:\Sym(Q)\to\Sym\bigl(\{<,>,\parallel\}^{Q}\bigr)$,
$\Phi(\pi):\Sigma\mapsto\Sigma\circ\pi^{-1}$, is a group homomorphism. Hence
$\ord\bigl(\Phi(h_{Q})\bigr)$ divides $\ord(h_{Q})$, the order of $h_{Q}$ in
$\Sym(Q)$. Since $h_{Q}|_{S}=\Phi(h_{Q})|_{S}$ and $S$ is
$\Phi(h_{Q})$-invariant (Lemma~\ref{lem:conj}), for every $\ell$ one has
$\bigl(h_{Q}|_{S}\bigr)^{\ell}=\Phi(h_{Q})^{\ell}\big|_{S}$, so any $\ell$
annihilating $\Phi(h_{Q})$ also annihilates $h_{Q}|_{S}$; thus
$\ord\bigl(h_{Q}|_{S}\bigr)$ divides $\ord\bigl(\Phi(h_{Q})\bigr)$. Combining,
\begin{equation}\label{eq:orddiv}
\ord\bigl(h_{Q}|_{S}\bigr)\ \big|\ \ord(h_{Q})\qquad(\text{order in }\Sym(Q)).
\end{equation}

\emph{Step 2 (the order of $h_{Q}$ on $Q$ divides $m/2$).} Each $q\in Q$ lies in
a non-free $H$-orbit, so $\Stab_{H}(q)\neq\{e\}$; the cycle of $h_{Q}$ through
$q$ is the $\langle h\rangle$-orbit of $q$, of length $[H:\Stab_{H}(q)]$, which
divides $m=2^{n}$ and is $<m$ (as $\Stab_{H}(q)\neq\{e\}$). A divisor of $2^{n}$
that is $<2^{n}$ is $2^{j}$ with $j\le n-1$, hence divides $2^{n-1}=m/2$. Thus
every cycle length of $h_{Q}$ divides $m/2$, and $\ord(h_{Q})$, the least common
multiple of these lengths, divides $m/2$.

Combining Step 1 and Step 2 via~\eqref{eq:orddiv}, $\ord\bigl(h_{Q}|_{S}\bigr)$
divides $\ord(h_{Q})$, which divides $m/2<m$, contradicting~\eqref{eq:ordm}.

\medskip
Both cases are impossible, so $R$ is not a single free orbit.
\end{proof}

\begin{corollary}\label{cor:lower}
For $n\ge3$, every minimal poset $P$ realizing $\Z_{2}\times\Z_{2^{n}}$
satisfies
\[
|P|=|R|+|Q|\ \ge\ 2m+2\ =\ 2^{\,n+1}+2 .
\]
\end{corollary}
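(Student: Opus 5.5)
The corollary follows by combining the two counting results already in hand. The key observation is that $P$ is the disjoint union $R\sqcup Q$, with $R$ the union of the free $H$-orbits. Hence $|P|=|R|+|Q|$, and it suffices to bound each summand separately.

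\emph{Bound on $|R|$.} Every free $H$-orbit has exactly $|H|=m$ points, since the stabilizer is trivial and the orbit-stabilizer theorem applies. So $|R|=rm$, where $r$ is the number of free orbits. By Lemma~\ref{lem:freeexists} we have $r\ge1$. By Proposition~\ref{prop:notsingle} the case $r=1$ is excluded for a minimal $P$, so $r\ge2$ and $|R|\ge2m$.

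\emph{Bound on $|Q|$.} Lemma~\ref{lem:Qge2} gives $|Q|\ge2$ directly for a minimal $P$ with $n\ge3$.

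Adding the two bounds gives $|P|\ge2m+2=2^{\,n+1}+2$. No step here is a genuine obstacle; the substantive work is in Proposition~\ref{prop:notsingle} and Lemma~\ref{lem:Qge2}. One point needs checking: both results were stated for a minimal $P$, so the hypotheses match the corollary as stated. Combining the corollary with Lemma~\ref{lem:upper} then yields $\beta=2^{\,n+1}+2$.
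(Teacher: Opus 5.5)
Your proposal is correct and follows the paper's own proof essentially verbatim. Lemma~\ref{lem:freeexists} gives $r\ge1$ and Proposition~\ref{prop:notsingle} excludes $r=1$, so $|R|=rm\ge2m$; Lemma~\ref{lem:Qge2} gives $|Q|\ge2$; and the two bounds add because $Q=P\setminus R$.
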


\begin{proof}
By Lemma~\ref{lem:freeexists} there is at least one free $H$-orbit, and by
Proposition~\ref{prop:notsingle} there is not exactly one; hence there are at
least two, giving $|R|\ge 2m=2^{\,n+1}$. By Lemma~\ref{lem:Qge2}, $|Q|\ge2$.
Adding gives the bound.
\end{proof}

\begin{theorem}\label{thm:main}
For $n\ge3$,\qquad $\beta(\Z_{2}\times\Z_{2^{n}})=2^{\,n+1}+2$.
\end{theorem}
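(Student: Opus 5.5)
The theorem follows by combining the two bounds already established in this section, so the plan is short. For the upper bound I invoke Lemma~\ref{lem:upper}. It takes a minimal realization $P_{1}$ of $\Z_{2^{n}}$ with $|P_{1}|=2^{\,n+1}$, whose existence comes from Corollary~\ref{cor:cyclic} using $b(2^{n})=2$ for $n\ge3$. It then adjoins two incomparable minimal points below $P_{1}$ as in Proposition~\ref{prop:timesZ2}. This gives $\beta(\Z_{2}\times\Z_{2^{n}})\le 2^{\,n+1}+2$.

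For the lower bound, I first note that $\beta(G)$ is a minimum over a nonempty set. Proposition~\ref{prop:timesZ2} exhibits a poset realizing $G$, so a minimal poset $P$ with $|P|=\beta(G)$ exists. I then apply Corollary~\ref{cor:lower} to $P$, with $H$ and $K$ fixed as in Convention~\ref{conv:realize}. That corollary gives $|P|\ge 2m+2=2^{\,n+1}+2$, and its ingredients are:
\begin{itemize}
\item at least one free $H$-orbit (Lemma~\ref{lem:freeexists});
\item not exactly one free $H$-orbit (Proposition~\ref{prop:notsingle}), so $|R|\ge 2m$;
\item $|Q|\ge2$ (Lemma~\ref{lem:Qge2}).
\end{itemize}
Hence $\beta(G)=|P|\ge2^{\,n+1}+2$, and the two inequalities give equality.

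The only point needing care is a mild circularity. Lemma~\ref{lem:Qge2} and Proposition~\ref{prop:notsingle} assume minimality, and they use the upper bound $|P|\le 2^{\,n+1}+2$ from Lemma~\ref{lem:upper} in the size reduction (mode \textbf{(S)}). This is not circular: the upper bound is proved independently, and the lower-bound argument only uses that a minimal $P$ exists and satisfies that upper bound. I would state this order of deduction explicitly, upper bound first and then the lower bound for a minimal realizer. The conclusion is $\beta(\Z_{2}\times\Z_{2^{n}})=2^{\,n+1}+2$ for all $n\ge3$. Since the substantive work is already in the preceding lemmas, I expect no real obstacle here beyond this bookkeeping.
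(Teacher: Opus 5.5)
Your proposal is correct and is essentially the paper's own proof: it takes the upper bound from Lemma~\ref{lem:upper} and applies Corollary~\ref{cor:lower} to a minimal realizer to get the matching lower bound. Your added bookkeeping is accurate and makes explicit what the paper leaves implicit: that a minimal realizer exists, and that the size reduction in Lemma~\ref{lem:Qge2} imports only the independently proved upper bound, so the argument is not circular.
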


\begin{proof}
Combine Corollary~\ref{cor:lower} (lower bound) with Lemma~\ref{lem:upper}
(upper bound).
\end{proof}

\begin{remark}\label{rem:n3}
This remark is explanatory and is not used elsewhere; the theorem and all proofs
assume $n\ge3$. Two of the numerical thresholds in the lower-bound argument are
self-contained and looser than $n\ge3$: Corollary~\ref{cor:sigrigid} needs only
$m\ge3$, and mode \textbf{(A)} needs only $m!>2m$, i.e.\ $m\ge4$. The size
reduction in Lemma~\ref{lem:Qge2}, by contrast, is \emph{not} self-contained:
it rules out $r\ge3$ by combining $|P|\ge3m$ with $|P|\le2m+2$, and the second
inequality is imported from Lemma~\ref{lem:upper}, which needs $n\ge3$ (it is
precisely the range in which $b(2^{n})=2$, so that $\beta(\Z_{2^{n}})=2^{\,n+1}$,
Corollary~\ref{cor:cyclic}). At $n=2$ one has $b(4)=3$ and $\beta(\Z_4)=12$, so
the correct bound is $|P|\le\beta(\Z_4)+2=14$ (Proposition~\ref{prop:timesZ2}),
and $3m=12$ is no longer greater than $14$; since $14$ is in fact the exact
value of $\beta(\Z_2\times\Z_4)$ \cite{DasMawiongZ4Pre}, not merely a loose
estimate, no sharper bound can repair the comparison. Thus the size-reduction
step, and with it Lemma~\ref{lem:Qge2} and Corollary~\ref{cor:lower}, are not
established at $n=2$ by this argument; this is why $n=2$ is genuinely
exceptional and is treated separately, see the discussion in the Introduction.
\end{remark}

\section{Conclusion}

The argument provides a framework for minimal poset realizations: control the
kernel of the action on each free orbit, force enough free orbits via the
signature/transport machinery, and bound the non-free part. Natural next steps
are non-abelian groups and products involving higher prime powers, the
uniqueness of minimal realizations up to isomorphism, and algorithmic
construction or detection of minimal realizations.

\section*{Acknowledgement}
The authors thank the anonymous reviewer for the careful reading of the
manuscript and the helpful comments.

\end{document}